\author{Tuomas Orponen and Kevin Ren}
\title[Projections of almost Ahlfors regular sets]{On the projections of almost Ahlfors regular sets}
\address{Department of Mathematics and Statistics\\ University of Jyv\"askyl\"a,
P.O. Box 35 (MaD)\\
FI-40014 University of Jyv\"askyl\"a\\
Finland}
\email{tuomas.t.orponen@jyu.fi}
\address{Department of Mathematics, Princeton University, 304 Washington Rd, Princeton, NJ 08540}
\email{kevinren@princeton.edu}
\date{\today}
\subjclass[2010]{28A80 (primary) 28A78 (secondary)}
\keywords{Projections, Ahlfors regular sets}
\thanks{T.O. is supported by the European Research Council (ERC) under the European Union’s Horizon Europe research and innovation programme (grant agreement No 101087499), and by the Research Council of Finland via the project \emph{Approximate incidence geometry}, grant no. 355453. K.R. is supported by an NSF GRFP fellowship.}
\newcommand{\R}{\mathbb{R}}
\newcommand{\N}{\mathbb{N}}
\newcommand{\Q}{\mathbb{Q}}
\newcommand{\Z}{\mathbb{Z}}
\newcommand{\spt}{\operatorname{spt}}
\newcommand{\diam}{\operatorname{diam}}
\newcommand{\m}{\mathfrak{m}}
\newcommand{\udelta}{\underline{\delta}}
\def\Barint_#1{\mathchoice
          {\mathop{\vrule width 6pt height 3 pt depth -2.5pt
                  \kern -8pt \intop}\nolimits_{#1}}%
          {\mathop{\vrule width 5pt height 3 pt depth -2.6pt
                  \kern -6pt \intop}\nolimits_{#1}}%
          {\mathop{\vrule width 5pt height 3 pt depth -2.6pt
                  \kern -6pt \intop}\nolimits_{#1}}%
          {\mathop{\vrule width 5pt height 3 pt depth -2.6pt
                  \kern -6pt \intop}\nolimits_{#1}}}
\numberwithin{equation}{section}
\theoremstyle{plain}
\newtheorem{thm}[equation]{Theorem}
\newtheorem{"thm"}[equation]{"Theorem"}
\newtheorem{lemma}[equation]{Lemma}
\newtheorem{"lemma"}[equation]{"Lemma"}
\newtheorem{cor}[equation]{Corollary}
\newtheorem{"proposition"}[equation]{"Proposition"}
\newtheorem{proposition}[equation]{Proposition}
\theoremstyle{definition}
\newtheorem{definition}[equation]{Definition}
\theoremstyle{remark}
\newtheorem{remark}[equation]{Remark}
\newcommand{\nref}[1]{(\hyperref[#1]{#1})}
\DeclareMathSymbol{\intop}  {\mathop}{mathx}{"B3}
\begin{document}

\begin{abstract} We show that the "sharp Kaufman projection theorem" from 2023 is sharp in the class of Ahlfors $(1,\delta^{-\epsilon})$-regular sets. This is in contrast with a recent result of the first author, which improves the projection theorem in the class of Ahlfors $(1,C)$-regular sets. \end{abstract}

\maketitle


\section{Introduction}

This note demonstrates that projection theorems for Ahlfors $(1,\delta^{-\epsilon})$-regular sets cannot be stronger than projection theorems for general sets -- and in particular they are not as strong as available projection theorems for Ahlfors $(1,C)$-regular sets.
\begin{definition}[Ahlfors $(s,C)$-regularity]\label{def:sCregularity} Let $C,s > 0$. A Borel measure $\mu$ on $\R^{d}$ is \emph{Ahlfors $(s,C)$-regular} if
\begin{equation}\label{ADR} C^{-1}r^{s} \leq \mu(B(x,r)) \leq Cr^{s}, \qquad x \in \spt \mu, \, 0 < r \leq \diam (\spt \mu). \end{equation}
A closed set $K \subset \R^{d}$ is called Ahlfors $(s,C)$-regular if $K = \spt \mu$ for some non-trivial Ahlfors $(s,C)$-regular measure $\mu$. \end{definition}

In 2023, the first author and Shmerkin \cite{2023arXiv230110199O} obtained the following projection theorem for "almost" Ahlfors regular sets (only the $1$-dimensional case is stated for simplicity):

\begin{thm}[Corollary 4.9 in \cite{2023arXiv230110199O}]\label{cor1} For every $C > 0$ and $0 < t < \tau \leq 1$, there exist $\delta_{0},\epsilon > 0$ such that the following holds for all $\delta \in (0,\delta_{0}]$. Let $\mu$ be an Ahlfors $(1,\delta^{-\epsilon})$-regular probability measure with $K := \spt \mu \subset B(1)$, and let $\nu$ be a Borel probability measure on $[0,1]$ satisfying $\nu(B(x,r)) \leq Cr^{\tau}$ for all $x \in [0,1]$ and $r > 0$. Then, there exists $\theta \in \spt \nu$ such that
\begin{equation}\label{form17} N_\delta (\pi_{\theta}(K)) \geq \delta^{-(1 + t)/2}. \end{equation}  \end{thm}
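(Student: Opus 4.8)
The plan is to argue by contradiction: fix the parameters and suppose that \emph{every} $\theta \in \spt\nu$ is \emph{bad}, in the sense that $N_\delta(\pi_\theta K) < \delta^{-(1+t)/2}$, and derive a contradiction once $\delta$ is small and $\epsilon$ is chosen small in terms of $t$ and $\tau$. First I would discretize. Replacing $K$ by the centers of a maximal $\delta$-separated subset, the Ahlfors $(1,\delta^{-\epsilon})$-regularity gives two-sided control $\delta^{-1+\epsilon} \lesssim N_\delta(K) \lesssim \delta^{-1-\epsilon}$, and, more importantly, it forces $K$ to branch at every scale between $\delta$ and $1$ (the number of $\delta$-balls inside any $B(x,r)$ is comparable to $r/\delta$ up to a factor $\delta^{-C\epsilon}$). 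On the other side, the Frostman condition $\nu(B(x,r)) \le Cr^\tau$ forces $\spt\nu$ to be spread out: $N_\delta(\spt\nu) \ge C^{-1}\delta^{-\tau}$. The whole game is therefore to show that the bad directions cannot be this numerous.

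Concretely, the reduction is to the single-scale exceptional-set estimate
\[
N_\delta\big(\{\theta : N_\delta(\pi_\theta K) \le \delta^{-\sigma}\}\big) \lesssim \delta^{-(2\sigma-1)-C\epsilon}, \qquad \sigma := \tfrac{1+t}{2},
\]
valid for any $K$ of discretized dimension $\ge 1 - C\epsilon$. Granting this, the bad directions are covered by $\lesssim \delta^{-(2\sigma-1)-C\epsilon} = \delta^{-t-C\epsilon}$ many $\delta$-arcs; since $t < \tau$, choosing $\epsilon$ small makes this far fewer than the $\delta^{-\tau}$ arcs needed to carry $\nu$, so the assumption that $\spt\nu$ consists entirely of bad directions collapses, and any $\theta \in \spt\nu$ outside the bad set satisfies \eqref{form17}.

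The heart of the matter is the displayed estimate, which is exactly the \emph{sharp} Kaufman bound ($2\sigma-1$ in the exponent, rather than the $\sigma$ coming from the classical $L^2$/energy method). This gap is precisely what the theorem exploits: the second-moment inequality $N_\delta(\pi_\theta K) \gtrsim N_\delta(K)^2/P(\theta)$, where $P(\theta)$ counts the $\delta$-incident pairs $\{(x,y) \in K\times K : |\pi_\theta(x-y)| \le \delta\}$, only yields the weaker range $t < 2\tau - 1$ after integrating $P(\theta)\,d\nu(\theta)$ and estimating $\nu$ on the perpendicular arcs by the Frostman condition. To reach the sharp exponent I would pass to the dual, Furstenberg picture: a bad direction $\theta$ provides a cover of $K$ by at most $\delta^{-\sigma}$ tubes in direction $\theta^\perp$, and assembling these covers over a putative $\tau$-dimensional family of bad directions produces a dense configuration of $\delta$-tubes. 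Feeding this into the sharp Furstenberg set estimate — equivalently, sharp projection–Furstenberg duality — forces the number of directions admitting such an efficient cover down to $\delta^{-(2\sigma-1)-C\epsilon}$, which is the claimed bound.

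The main obstacle is thus this sharp Furstenberg/Kaufman input; everything else is discretization bookkeeping. Two points need care. First, the almost-regularity of $K$ is what makes the single-scale statement robust: without branching at all intermediate scales, a $\delta$-set with $N_\delta(K) \approx \delta^{-1}$ could concentrate and defeat the incidence count, whereas the $\delta^{-\epsilon}$-regularity supplies exactly the multi-scale structure the Furstenberg estimate consumes. Second, each step loses multiplicative factors of $\delta^{-C\epsilon}$, so $\epsilon$ is fixed last, small enough that $t + C\epsilon < \tau$; this is the only constraint pinning down $\epsilon$, and $\delta_0$ is then chosen so that these $\delta^{-C\epsilon}$ factors are dominated by the gap $\delta^{-(\tau-t)}$ for all $\delta \le \delta_0$.
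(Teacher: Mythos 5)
The first thing to note is that this paper contains no proof of Theorem \ref{cor1}: it is imported verbatim as Corollary 4.9 of \cite{2023arXiv230110199O}, and serves in this paper only as the benchmark whose sharpness Theorems \ref{main1}--\ref{main2} establish. So your proposal has to stand on its own, and its outer layer is indeed correct and standard: writing $\sigma := (1+t)/2$, the Frostman hypothesis forces $N_\delta(\spt \nu) \geq C^{-1}\delta^{-\tau}$ (more precisely, $\spt \nu$ contains a $\delta$-separated $(\delta,\tau)$-set of cardinality roughly $\delta^{-\tau}$), so any estimate asserting that the set of bad directions $\{\theta : N_\delta(\pi_\theta K) < \delta^{-\sigma}\}$ has size, or non-concentration exponent, at most $2\sigma - 1 + C\epsilon = t + C\epsilon$ yields the theorem once $\epsilon$ is fixed with $t + C\epsilon < \tau$ and $\delta_0$ is taken small. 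Your numerology is right, including the observation that the classical second-moment (Kaufman) argument only reaches the range $t < 2\tau - 1$.

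The genuine gap is that the entire mathematical content is then delegated to a black box, the ``sharp Furstenberg set estimate / sharp projection--Furstenberg duality,'' which is neither proven nor sketched --- and which is essentially the statement being proven. The exceptional-set bound with exponent $2\sigma - 1$ for (almost) $1$-dimensional sets \emph{is} the sharp Kaufman projection theorem: it is exactly what Corollary 4.9 of \cite{2023arXiv230110199O} asserts for almost-regular measures, and what \cite{2023arXiv230808819R} later established for general Frostman measures. So the proposal reduces the theorem to an equivalent-or-stronger theorem rather than proving it; worse, the dependency in the literature runs the other way, since the Ren--Wang proof of the sharp Furstenberg estimate is built on the Orponen--Shmerkin machinery for regular sets developed in the very paper containing Corollary 4.9, so citing it here is circular. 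The actual proof in \cite{2023arXiv230110199O} goes through a multiscale inverse-theorem analysis (their ``ABC sum-product'' theorem), none of which appears in your sketch. A secondary, repairable issue: you state the key input as a single-scale covering bound $N_\delta(\{\theta : N_\delta(\pi_\theta K) \leq \delta^{-\sigma}\}) \lesssim \delta^{-(2\sigma - 1) - C\epsilon}$, which is not the form such results take (nor is it obviously true, since covering numbers do not detect concentration --- e.g.\ an interval of bad directions of length $\delta^{2 - 2\sigma}$ already attains this bound); the available and sufficient formulation is that the bad set contains no $(\delta, u, \delta^{-\epsilon})$-set for $u > 2\sigma - 1 + C\epsilon$, and your contradiction argument should be run in that form, which is possible precisely because $\nu$ is Frostman.
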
  
Here, $N_\delta (K)$ is the $\delta$-covering number of $K$, and $\pi_{\theta}(x,y) := x + \theta y$ for $(x,y) \in \R^{2}$, $\theta \in \R$. In \cite{2023arXiv230808819R}, the second author and Wang showed that the Ahlfors $(1,\delta^{-\epsilon})$-regularity of $\mu$ can relaxed to the Frostman condition $\mu(B(x,r)) \leq \delta^{-\epsilon}r$ (for $x \in \R^{2}$ and $r > 0$). This is the \emph{sharp Kaufman projection theorem} -- being the sharp version of Kaufman's projection theorem \cite{Ka} from the 60s.

As the name "sharp Kaufman" suggests, the projection theorem from \cite{2023arXiv230808819R} is sharp under the hypothesis $\mu(B(x,r)) \leq \delta^{-\epsilon}r$, or even $\mu(B(x,r)) \leq r$. This can be deduced from \cite[p. 10]{Wolff99} with some effort; more recent sources are \cite[Lemma 3.1]{2024arXiv240411179F} and \cite[Appendix]{MR4745881}. However, it remained open until now whether the numerology in \eqref{form17} could be improved in the class of $(1,\delta^{-\epsilon})$-regular measures. Our first main result shows that this not the case:
\begin{thm}\label{main1} For every $0 < \tau < t \leq 1$ with $t,\tau \in \Q$, there exists $\delta_{0} = \delta_{0}(t,\tau) > 0$ such that the following holds for all $\delta \in (0,\delta_{0}]$. There exists an Ahlfors $(1,\log(1/\delta)^{74/(t - \tau)})$-regular probability measure $\mu$ with $\spt \mu = K \subset B(1) \subset \R^{2}$ and a Borel probability measure $\nu$ on $[0,1]$ satisfying $\nu(B(x,r)) \leq Cr^{\tau}$ for all $x \in [0,1]$ and $r > 0$ (where $C > 0$ is an absolute constant) such that
\begin{equation}\label{form15} N_\delta (\pi_{\theta}(K)) \leq \delta^{-(1 + t)/2}, \qquad \theta \in \spt \nu. \end{equation}
\end{thm}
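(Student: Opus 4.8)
The plan is to construct $\mu$ and $\nu$ by hand, as self-similar objects adapted to a single integer base $b$ that is allowed to grow polylogarithmically in $1/\delta$. Fix $b$ to be a large perfect power, so that $b^{1/2}, b^{\tau}, b^{(1+t)/2} \in \N$ --- this is precisely where the hypothesis $t, \tau \in \Q$ is used --- and write $\delta = b^{-n}$. For the set I take a product $K = C \times C \subset B(1)$, where $C \subset [0,1]$ is the base-$b$ self-similar Cantor set with digit set $\{0, 1, \ldots, b^{1/2} - 1\}$. Then $C$ is Ahlfors $\tfrac{1}{2}$-regular, $K = C \times C$ is Ahlfors $1$-regular, and $N_\delta(K) \lesssim \delta^{-1}$. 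Since the projection is $\pi_\theta(K) = C + \theta C$, the entire problem reduces to producing a $\tau$-dimensional set of directions $\theta$ along which the sumset $C + \theta C$ collapses to cardinality $\delta^{-(1+t)/2}$ at scale $\delta$.

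The engine is the elementary fact that arithmetic progressions collapse under rational projections. For $\theta = p/q$ in lowest terms with $p \le q \approx b^{t/2}$, the single-scale ``digit projection'' $\{c + \theta c' : 0 \le c, c' < b^{1/2}\}$ equals $q^{-1}\{cq + pc'\}$, and an elementary count shows that the integers $cq + pc'$ fill an interval of length $\approx b^{(1+t)/2}$ while the map $(c,c') \mapsto cq + pc'$ is $\approx b^{(1-t)/2}$-to-one; hence the digit projection takes only $\approx b^{(1+t)/2}$ distinct values, all integers smaller than $b$. Because these values are genuine base-$b$ digits, no carrying occurs when summing across scales, so the level-$n$ projection $\{q^{-1}\sum_{k=1}^{n}(c_k q + p c'_k) b^{-k}\}$ attains at most $(b^{(1+t)/2})^{n} = \delta^{-(1+t)/2}$ values. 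This already establishes \eqref{form15} for every such rational $\theta$, but so far only for a countable, hence zero-dimensional, set of directions.

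The crux is to upgrade this to a $\tau$-dimensional $\spt \nu$. I would build $\Theta = \spt \nu$ as a self-similar Cantor set whose elements are, at every scale $b^{-k}$, well approximated by a ``good'' rational of the type above, keeping $b^{\tau}$ admissible children at each node so that $\dim \Theta = \tau$ and $\nu(B(x,r)) \le C r^{\tau}$ with an absolute constant $C$; there is room to do this because the good rationals with denominator $\approx b^{t/2}$ are $\approx b^{-t}$-separated and roughly equidistributed in $[0,1]$, and $\tau < 2t/(1+t)$. The genuine obstacle --- and the step I expect to carry all the difficulty --- is that $\theta$ enters the projection multiplicatively through $\theta y$: a perturbation $\theta = p/q + \eta$ displaces each point of $K$ by $\eta y$, so the scale-$k$ digit of $\theta$ couples to all digits of $y$ and reintroduces carrying. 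Consequently the collapse is not robust on a fixed ball around $p/q$ but must be re-certified scale by scale, synchronizing the branching of $\Theta$ with the self-similar structure of $K$ and bounding the error that accumulates over all $n \approx \log(1/\delta)/\log b$ scales.

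It is exactly this error control that dictates the polylogarithmic regularity constant. To absorb the per-scale coupling error while retaining $b^{\tau}$ children, one needs a multiplicative margin of size $\approx b^{t - \tau}$ at each scale, i.e. $b^{t-\tau} \gtrsim \log(1/\delta)$, forcing $b \approx (\log(1/\delta))^{1/(t-\tau)}$. Since the Ahlfors regularity constant of a product of base-$b$ arithmetic Cantor sets is $\approx b^{O(1)}$, this choice of $b$ yields a constant of the form $(\log(1/\delta))^{O(1)/(t-\tau)}$, and a careful accounting of the admissible power produces the stated exponent $74/(t-\tau)$. The blow-up as $t \downarrow \tau$ is the expected shadow of Theorem \ref{cor1}, which rules out any such example once $\tau > t$.
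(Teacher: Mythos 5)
Your single-direction computation is fine: for a fixed rational $\theta = p/q$ in lowest terms with $q \approx b^{t/2}$, the digit argument does give $N_\delta(C + \theta C) \le C_0^{\,n} b^{n(1+t)/2}$, and this is the classical Wolff-type sharpness example. The gap is the step you defer to the end (``the genuine obstacle''), and it is not merely a hard step left unexecuted --- as you have set it up, it cannot be carried out at all, for a counting reason that is independent of the carrying/coupling issue you worry about. Writing $\theta = p/q + \eta$, one has $\pi_\theta(K) \subset \pi_{p/q}(K) + [-|\eta|,|\eta|]$, hence $N_\delta(\pi_\theta(K)) \lesssim (1 + |\eta|/\delta)\,N_\delta(\pi_{p/q}(K))$; to stay under the threshold $\delta^{-(1+t)/2}$ in \eqref{form15}, the factor $|\eta|/\delta$ must be covered by the margin of the rational bound, and even if you create margin by taking denominators $q \approx b^{t'/2}$ with $t' < t$, this forces $|\eta| \lesssim \delta^{1 - (t-t')/2} \le \delta^{1/2}$. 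But the rationals in $[0,1]$ with denominator $\approx b^{t'/2}$ number only $\approx b^{t'} \le b$, and $b$ is polylogarithmic in $1/\delta$ (it must be, both by your own choice $b \approx \log(1/\delta)^{1/(t-\tau)}$ and because the regularity constant of $K$ is $b^{O(1)}$). So any direction set certified by your mechanism lies in a union of at most $\approx b$ intervals of length $\lesssim \delta^{1/2}$, and a $(\tau,C)$-Frostman measure with absolute constant $C$ gives such a set total mass at most $\approx C\, b\, \delta^{\tau/2} < 1$ once $\delta$ is small: no admissible probability measure $\nu$ exists. Equivalently, your self-similar $\Theta$ is supposed to have $b^{k\tau}$ nodes at level $k$, each anchored near a good rational, but there are only $\approx b$ good rationals, so the branching dies after $O(1/\tau)$ levels. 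Escaping this by letting the denominators grow with the scale ($q_k \approx b^{kt/2}$, a Jarn\'ik-type direction set) abandons your digit analysis entirely (carries then propagate across $\approx k$ digit places) and leads toward the known non-regular sharpness examples --- whose whole point, and the point of this theorem, is that they are far from Ahlfors regular.

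The paper's construction (Proposition \ref{mainProp} / Lemma \ref{mainLemma}) avoids rationals altogether, and both of your basic objects have to change, not just the direction set. There, $\Theta$ is itself self-similar in base $b = \rho^{-1}$ with the \emph{full} complement of $b^{\tau}$ digits at every scale (digits are the multiples of $b^{-\tau}$), so the Frostman bound with absolute constant is automatic; and $K = A \times B$ is an \emph{asymmetric} product, with $A$ of dimension $(1+\tau)/2$ (digit spacing $b^{-(1+\tau)/2}$) and $B$ of dimension $(1-\tau)/2$ (digit spacing $b^{-(1-\tau)/2}$). The collapse is then pure lattice arithmetic: every product of a $\Theta$-digit and a $B$-digit is a multiple of $b^{-\tau} \cdot b^{-(1-\tau)/2} = b^{-(1+\tau)/2}$, i.e.\ lies on the digit lattice of $A$, so $A + \theta B$ has only $\lesssim n\, b^{(1+\tau)/2}$ admissible digits per scale. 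Carries are not eliminated but cost a factor $O(n)$ per level, hence $(Cn)^{n} = \delta^{-o(1)}$ in total, which is absorbed by the per-level margin $(t-\tau)/2$; this absorption is exactly where the polylogarithmic base is needed (so that $n \approx \log(1/\delta)/\log\log(1/\delta)$ and $n^{n} = \delta^{-O(t-\tau)}$), which matches the one piece of your intuition that is correct. Note finally that your symmetric choice $K = C \times C$ is incompatible with this mechanism: with a fully branching $\tau$-dimensional $\Theta$ the lattice argument only yields $N_\delta(C + \Theta C) \approx \delta^{-(1/2 + \tau)}$, which exceeds $\delta^{-(1+t)/2}$ whenever $\tau > t/2$. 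So the repair is not a patch to your error analysis but a different construction of both $K$ and $\Theta$.
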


Theorem \ref{main1} shows that the numerology \eqref{form17} is sharp even if the hypothesis of Ahlfors $(1,\delta^{-\epsilon})$-regularity is upgraded to $(1,\log (1/\delta)^{C})$-regularity (for $C = C(t,\tau) \geq 1$). In contrast, a recent result \cite{2024arXiv241006872O} of the first author yields an improvement over \eqref{form17} in the class of Ahlfors $(1,C)$-regular measures (only the $1$-dimensional case is stated for simplicity):

\begin{thm}\label{mainOld} For every $C > 0$, $\tau \in (0,1]$, there exists $\delta_{0} > 0$ such that the following holds for all $\delta \in (0,\delta_{0}]$. Let $\mu$ be an Ahlfors $(1,C)$-regular probability measure on $\R^{2}$ with $K := \spt \mu \subset B(1)$. Let $\nu$ be a Borel probability measure on $[0,1]$ satisfying $\nu(B(x,r)) \leq Cr^{\tau}$ for all $x \in [0,1]$ and $r > 0$. Then, there exists $\theta \in \spt \nu$ such that 
\begin{displaymath} N_\delta (\pi_{\theta}(K)) \geq \delta^{-1 + \tau}. \end{displaymath} \end{thm}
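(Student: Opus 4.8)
The plan is to recast the statement as a quantitative exceptional-set estimate and then feed it the $\tau$-Frostman property of $\nu$. Set
\[
E_{\delta} := \{\theta \in [0,1] : N_\delta(\pi_{\theta}(K)) < \delta^{-1+\tau}\}
\]
for the collection of \emph{bad} directions. Covering $E_\delta$ by $N_\delta(E_\delta)$ intervals of length $\delta$ and using $\nu(J) \le C|J|^{\tau}$ gives $\nu(E_\delta) \le C\,\delta^{\tau}\,N_\delta(E_\delta)$. Consequently it suffices to prove the exceptional-set bound $N_\delta(E_\delta) \le \delta^{-\tau+\kappa}$ for some $\kappa = \kappa(C,\tau) > 0$: then $\nu(E_\delta) \le C\delta^{\kappa} < 1 = \nu(\spt \nu)$ once $\delta \le \delta_0(C,\tau)$, so $\spt\nu \setminus E_\delta \neq \emptyset$ and any $\theta$ in it is as required. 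The useful feature of this reduction is that I do not need the sharp exponent $\tau$ for $N_\delta(E_\delta)$ — any fixed power improvement over the trivial bound $\delta^{-\tau}$ closes the argument, because $C$ is held fixed while $\delta \to 0$.

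The content is therefore the exceptional-set bound, and this is where genuine Ahlfors regularity with a bounded constant must enter. Marstrand's theorem already gives $N_\delta(\pi_\theta(K)) = \delta^{-1+o(1)}$ for Lebesgue-a.e.\ $\theta$, so $E_\delta$ is a genuine exceptional set; the difficulty is that for small $\tau$ its covering number must be controlled below $\delta^{-\tau}$, i.e.\ essentially $\Bd(E_\delta) < \tau$, whereas the relevant projection threshold $\sigma = 1-\tau$ is close to $1$. For merely Frostman (or $(1,\delta^{-\epsilon})$-regular) data the best available control is the sharp Kaufman/Ren--Wang estimate $\Hd\{\theta : \Hd(\pi_\theta(K)) \le \sigma\} \le \max(0, 2\sigma - 1)$, which handles $E_\delta$ only when $\tau > \tfrac13$. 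To reach all $\tau \in (0,1]$ I would exploit that $(1,C)$-regularity is \emph{exactly} scale invariant, with the same constant $C$ at every scale. The mechanism: a bound $N_\delta(\pi_\theta(K)) < \delta^{-1+\tau}$ forces the branching of the dyadic tree of $K$ under $\pi_\theta$ to be deficient at a positive proportion of the $\approx \log(1/\delta)$ scales between $1$ and $\delta$; at each individual scale, non-concentration of the projection of a rescaled $(1,C)$-regular piece is controlled — here I can borrow the sharp Kaufman theorem of \cite{2023arXiv230808819R} (or an elementary incidence/$L^2$ count) applied to each child configuration — and, because every rescaled piece carries the same regularity constant, these single-scale conclusions can be chained into a multiscale renormalization estimate that confines the persistently deficient directions to a set of covering number $\delta^{-\tau+\kappa}$.

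The main obstacle is precisely this chaining: converting the single-scale non-concentration statements into a genuine large-deviation bound over the $\approx \log(1/\delta)$ scales of the tree, with all constants kept independent of the number of scales, and with the confinement arcs produced at successive scales shown to intersect efficiently rather than to proliferate. This is the step where the hypothesis $C = O(1)$ is indispensable: a regularity budget that grows even poly-logarithmically in $1/\delta$ can be distributed across the $\log(1/\delta)$ scales so as to sustain deficient branching in a \emph{common} direction at every scale, which is exactly the concentrated configuration produced by the sharpness Theorem \ref{main1}. I expect the delicate points to be (i) isolating the correct single-scale non-concentration statement, so that its conclusion is stable under rescaling and composes across scales, and (ii) the entropy and overlap bookkeeping in the resulting multiscale sum; once (ii) is executed with a definite per-scale gain, the reduction of the first paragraph completes the proof.
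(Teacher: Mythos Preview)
This theorem is not proved in the present paper at all: it is quoted from \cite{2024arXiv241006872O} as background, and the paper only proves the \emph{negative} companion results (Theorems \ref{main1} and \ref{main2}). So there is no ``paper's own proof'' to compare your proposal against.

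As to the proposal itself: your reduction to an exceptional-set covering bound $N_\delta(E_\delta)\le \delta^{-\tau+\kappa}$ is clean and correct, and your diagnosis of why sharp Kaufman alone only reaches $\tau>1/3$ is accurate. But the substance of the theorem lies entirely in the step you label ``the main obstacle,'' and there you have written a strategy memo rather than an argument. You have not specified what the single-scale non-concentration statement is, what quantity it gains, or how the gains at different scales are made to \emph{compound} rather than merely add; and you have not said how the constraint sets produced at different scales on the direction $\theta$ are shown to intersect in a set of covering number $\delta^{-\tau+\kappa}$ rather than in one of size $\delta^{-\tau}$ times a sub-polynomial loss. Since the whole theorem is exactly this chaining, what you have is a plausible outline (and one whose spirit matches the remark following Theorem \ref{mainOld} about why a fixed constant $C$ is essential), but not a proof. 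To turn it into one you would need to consult \cite{2024arXiv241006872O} or supply the multiscale bookkeeping yourself.
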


\begin{remark} The proof of Theorem \ref{mainOld} is effective. It seems likely that it would yield a statement, where the constant "$C$" is replaced by an extremely slowly increasing function of $1/\delta$, perhaps a function which grows slower than any iterated logarithm. It would be interesting to determine the sharp growth of this function. Theorem \ref{main1} shows that $\log(1/\delta)^{C}$ grows too rapidly, but $\log \log (1/\delta)$ remains plausible as far as we know.  \end{remark}

We then state a second negative result which shows that even "strict" Ahlfors regularity of $\mu$ is not sufficient to yield any improvement over \eqref{form17} if the Frostman constant of the measure $\nu$ is allowed to grow like a logarithm of $1/\delta$. Again, positive results remain plausible if the Frostman constant of $\nu$ only grows like $\log \log (1/\delta)$.

\begin{thm}\label{main2} For every $0 < \tau < t \leq 1$ with $t,\tau \in \Q$, there exists $\delta_{0} = \delta_{0}(t,\tau) > 0$ such that the following holds for all $\delta \in (0,\delta_{0}]$. There exists an absolute constant $C > 0$, an Ahlfors $(1,C)$-regular probability measure $\mu$ with $\spt \mu = K \subset B(1) \subset \R^{2}$ and a Borel probability measure $\nu$ on $[0,1]$ satisfying $\nu(B(x,r)) \leq C\log(1/\delta)^{3\tau(1 - \tau)/(t - \tau)}r^{\tau}$ for all $x \in [0,1]$ and $r > 0$ such that
\begin{equation}\label{form16} N_\delta (\pi_{\theta}(K)) \leq \delta^{-(1 + t)/2}, \qquad \theta \in \spt \nu. \end{equation}
\end{thm}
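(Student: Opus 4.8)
The plan is to produce $(\mu,\nu)$ by a multi-scale, self-similar construction running parallel to the one behind Theorem~\ref{main1}, but \emph{rebalancing} which of the two measures absorbs the irregularity that is intrinsic to any sharp Kaufman example. The starting point is the classical sharp example valid under the mere upper bound $\mu(B(x,r)) \lesssim r$, as in \cite[Lemma 3.1]{2024arXiv240411179F} and \cite[Appendix]{MR4745881}: it supplies a set $K_{0}$ carrying an upper $1$-regular measure and a $\tau$-Frostman measure $\nu_{0}$ with absolute constant, with $N_\delta(\pi_\theta(K_{0})) \leq \delta^{-(1+t)/2}$ for all $\theta \in \spt \nu_{0}$. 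Such $K_{0}$ is typically \emph{not} lower regular, hence not Ahlfors regular in the sense of \eqref{ADR}. Theorem~\ref{main1} repairs this while letting the regularity constant grow like $\log(1/\delta)^{74/(t-\tau)}$; for Theorem~\ref{main2} I would instead demand an \emph{absolute} regularity constant for $\mu$ and charge the entire cost to the Frostman constant of $\nu$.

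Concretely, I would realise $K$ as a self-similar Cantor-type set generated by a building block iterated across $\asymp \log(1/\delta)$ scales, with the branching number and the separation of the block kept within \emph{universal} bounds (independent of $t,\tau$, using the hypothesis $t,\tau \in \Q$ to make all counts exact integers). Uniformity of the block then propagates the two-sided estimate \eqref{ADR} through every scale with no accumulation, so that $\mu$, the natural self-similar measure on $K$, is Ahlfors $(1,C)$-regular with an \emph{absolute} constant $C$. The direction set $\spt\nu$ is the companion self-similar set whose branches record, scale by scale, the directions along which the block compresses $\pi_\theta$; choosing the per-scale compression appropriately yields $N_\delta(\pi_\theta(K)) \leq \delta^{-(1+t)/2}$ for every $\theta \in \spt\nu$, the separation $t - \tau$ being accumulated gradually over the scales.

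The price of pinning $\mu$ to an absolute regularity constant is that $\spt\nu$ can no longer be a clean $\tau$-regular Cantor set: to keep every projection compressed while $K$ fills space evenly, the admissible directions at each active scale must be concentrated into a thinner sub-family. Redistributing $\nu$ so as to retain $\dim \spt\nu \geq \tau$ then inflates the Frostman constant by a bounded factor per active scale, and compounding this over the $\asymp 1/(t-\tau)$ interpolation steps needed to pass from exponent $\tau$ to exponent $t$ should produce exactly $\nu(B(x,r)) \leq C\log(1/\delta)^{3\tau(1-\tau)/(t-\tau)}r^{\tau}$, the factor $\tau(1-\tau)$ reflecting the concentration budget of a $\tau$-dimensional set of directions inside $[0,1]$.

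I expect the crux to be the \emph{simultaneous} bookkeeping of the three constraints --- (i) Ahlfors $(1,C)$-regularity of $\mu$ with $C$ absolute, (ii) the Frostman bound for $\nu$ with the precise constant in \eqref{form16}, and (iii) the projection estimate for every $\theta \in \spt\nu$ --- since (i) and (ii) pull in opposite directions: uniformising $\mu$ forces $\nu$ to concentrate, and conversely. The real work is to quantify this trade-off sharply and to check that routing the whole defect into $\nu$ yields the exponent $3\tau(1-\tau)/(t-\tau)$, in contrast with the exponent $74/(t-\tau)$ that Theorem~\ref{main1} obtains by routing it into $\mu$.
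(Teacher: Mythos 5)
Your proposal is not a proof but a plan, and the plan's own quantitative bookkeeping cannot produce the bound in the statement. The decisive issue is your accounting for the Frostman constant of $\nu$: you claim it inflates by ``a bounded factor per active scale'', compounded over $\asymp 1/(t-\tau)$ interpolation steps. Compounding a bounded factor over $\asymp 1/(t-\tau)$ steps gives a constant $C^{1/(t-\tau)}$ \emph{independent of} $\delta$ --- which would be a far stronger statement than Theorem \ref{main2}, and in the regime $t + 2\tau < 1$ would actually contradict Theorem \ref{mainOld} (an Ahlfors $(1,C)$-regular $K$ and a $(\tau,C')$-Frostman $\nu$ with $C,C'$ fixed force some $\theta \in \spt \nu$ with $N_\delta(\pi_\theta(K)) \geq \delta^{-1+\tau} > \delta^{-(1+t)/2}$). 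Compounding instead over the $\asymp \log(1/\delta)/\log\log(1/\delta)$ construction scales gives a factor exponential in $\log(1/\delta)$, i.e.\ a power of $\delta^{-1}$, which is far too large. Neither route yields a power of $\log(1/\delta)$, so the exponent $3\tau(1-\tau)/(t-\tau)$ is not derived by your scheme; it is read off from the statement. Relatedly, your picture of $\spt \nu$ being ``concentrated into a thinner sub-family'' scale by scale has no supporting mechanism, and you explicitly defer the reconciliation of your constraints (i)--(iii), which is precisely the content of the theorem.

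What is missing is the paper's single concrete device: a global anisotropic rescaling of the Theorem \ref{main1} example, not a new multi-scale construction. The paper takes the product $A_0 \times B_0$ from Proposition \ref{mainProp} --- which is only $\log$-regular, the irregularity coming from the mismatch between the level-$j$ interval length $\rho^j$ and the spacings $\rho^{j-1}\rho^{(1\pm\tau)/2}$ --- and applies the linear map $T_\rho(x,y) = (x,\rho^{(1+\tau)/2}y)$. This one map aligns the vertical separation of the level-$j$ rectangles with their width $\rho^j$, and a geometric case analysis (the ``$j$-stacks'') shows the image $K = A_0 \times \rho^{(1+\tau)/2}B_0$ is Ahlfors $(1,C)$-regular with an \emph{absolute} constant. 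Dually, to keep the projection bound one must dilate the direction set by $\rho^{-(1+\tau)/2}$ so that $A_0 + \Theta B \subset A_0 + \Theta_0 B_0$, and after restricting to $[0,1]$ and renormalizing (using $\bar{\nu}([0,1]) \geq \rho^{\tau}$) the Frostman constant picks up exactly the factor $\rho^{-\tau(1-\tau)/2} \sim \log(1/\delta)^{3\tau(1-\tau)/(t-\tau)}$, with $\rho \sim \log(1/\delta)^{-6/(t-\tau)}$. The logarithmic loss is thus transferred from $\mu$ to $\nu$ by one change of variables rather than accumulated across scales; without this (or an equally explicit) mechanism, your proposal does not establish the theorem.
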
 

\begin{remark} The proofs of Theorems \ref{main1} and \ref{main2} show that even the unions $\bigcup_{\theta \in \spt \nu} \pi_{\theta}(K)$ satisfy the same $\delta$-covering number estimates \eqref{form15} and \eqref{form16}. We also mention that the sets $K$ in both Theorems \ref{main1}-\ref{main2} are product sets, and the set $K$ in Theorem \ref{main1} is self-similar. This does not contradict Hochman's projection theorem \cite{Ho} for self-similar sets, because the number of similitudes generating $K$ depends on $\delta$. Finally: the set $K$ in Theorem \ref{main2}, while a product set, and strictly Ahlfors regular, is not a product of two strictly Ahlfors regular sets. Indeed for products of strictly Ahlfors regular sets the positive result \cite[Theorem 3.6]{MR4388762} sometimes contradicts the numerology of Theorem \ref{main2}. \end{remark} 

\subsection*{Structure of the paper} Section \ref{s:preliminaries} contains preliminaries. Section \ref{s:main} contains the main construction, see Proposition \ref{mainProp}, which yields Theorem \ref{main1} immediately. The proof of Theorem \ref{main2} requires an additional rescaling argument, and is completed in Section \ref{s4}. 

\section{Preliminaries}\label{s:preliminaries}

Here we record some elementary results which will allow us to easily identify that certain measures are $(s,C)$-regular with a bound on the constant "$C$". Below, for $A \subset \R^{d}$ and $r \in 2^{-\N}$, the notation $\mathcal{D}_{r}(A)$ stands for dyadic cubes of side-length $r$ intersecting $A$, and $|A|_r := |\mathcal{D}_r (A)|$ is the dyadic $r$-covering number of $A$ (which is comparable to the standard $r$-covering number $N_r (A)$).

\begin{definition}[$(s,C)$-regularity between two scales] Let $0 < \delta < \Delta \leq 1$, $s \in [0,d]$, and $C \geq 1$. A set $P \subset [0,1]^{d}$ is called \emph{$(s,C)$-regular between scales $\delta$ and $\Delta$} if 
\begin{equation}\label{form6} C^{-1}(R/r)^{s} \leq |P \cap Q|_{r} \leq C(R/r)^{s}, \qquad \delta \leq r \leq R \leq \Delta, \, Q \in \mathcal{D}_{R}(P). \end{equation} 
Here $r,R \in 2^{-\N}$. \end{definition} 

The next proposition spells out the connection between $(s,C)$-regular sets (as above) and Ahlfors $(s,C)$-regular measures as in Definition \ref{def:sCregularity}.

\begin{proposition}\label{prop1} Let $s \in [0,d]$, $C \geq 1$, and $\delta \in (0,\tfrac{1}{2}]$. Assume that $P \subset [0,1]^{d}$ is $(s,C)$-regular between scales $\delta$ and $1$. Then there exists an Alhfors $(s,O_{d}(C^{2}))$-regular probability measure supported on the $\delta$-neighbourhood of $P$.
  \end{proposition}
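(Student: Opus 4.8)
The plan is to build $\mu$ top-down along the dyadic tree and then glue a fixed model set below scale $\delta$. Write $\mathcal{D}_{\delta}(P) = \{q_{1},\dots,q_{N}\}$ for the dyadic $\delta$-cubes meeting $P$, where $N = |P|_{\delta}$. Applying \eqref{form6} with $R = 1$ and $r = \delta$ (noting that $[0,1)^{d}$ is the unique scale-$1$ dyadic cube meeting $P$) gives $C^{-1}\delta^{-s} \leq N \leq C\delta^{-s}$, and more generally, for any $Q \in \mathcal{D}_{R}(P)$ with $R \in [\delta,1]$, the number of the $q_{i}$ contained in $Q$ is $|P \cap Q|_{\delta} \in [C^{-1}(R/\delta)^{s}, C(R/\delta)^{s}]$. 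A first attempt would be to spread mass $1/N$ as normalised Lebesgue measure on each $q_{i}$; its dyadic masses then satisfy $\mu(Q) = |P \cap Q|_{\delta}/N \in [C^{-2}R^{s}, C^{2}R^{s}]$, which is exactly what we want at scales $\geq \delta$. However this measure is purely $d$-dimensional below scale $\delta$ and hence fails the lower Ahlfors bound at small scales when $s < d$.

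The main obstacle is therefore to continue the construction below scale $\delta$ so as to (i) restore honest $s$-regularity at all scales and (ii) not let the constant degrade. Replicating $P$ self-similarly inside each $q_{i}$ is tempting but fatal: the similarity dimension $\log N/\log(1/\delta)$ differs from $s$ by up to $\log C/\log(1/\delta)$, and iterating this $k$ times compounds the error into a constant $\sim C^{k}$ rather than $O_{d}(C^{2})$. Instead I would fix once and for all a compact Ahlfors $(s, O_{d}(1))$-regular model set $F \subset [\tfrac14,\tfrac34]^{d}$ with $\diam F \sim_{d} 1$, carrying a regular probability measure $\lambda$; such $F$ exists for every $s \in [0,d]$ by a standard product-of-Cantor-sets construction. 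Writing $T_{q_{i}}$ for the homothety of ratio $\delta$ taking $[0,1]^{d}$ onto $q_{i}$, I set
\[ \mu := \frac{1}{N}\sum_{i=1}^{N}(T_{q_{i}})_{\#}\lambda. \]
Since $F$ sits in the interior of $[0,1]^{d}$, the copies $T_{q_{i}}(F)$ lie in pairwise disjoint central portions of the $q_{i}$, so $\mu$ is a probability measure with $\spt \mu \subset \bigcup_{i}\overline{q_{i}}$; working in the $\ell^{\infty}$ metric (at the cost of an $O_{d}(1)$ factor in all constants), this is contained in the $\delta$-neighbourhood of $P$.

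It remains to verify $\mu(B(x,r)) \asymp r^{s}$ with constant $O_{d}(C^{2})$, split at scale $\delta$. For $r \in [\delta, \diam \spt \mu]$ the copies behave like point masses and the dyadic estimate $\mu(Q) \in [C^{-2}R^{s}, C^{2}R^{s}]$ for $Q \in \mathcal{D}_{R}(P)$ applies verbatim, since each $q_{i} \subset Q$ contributes its full mass $1/N$ (as $T_{q_{i}}(F) \subset \interior(Q)$ or is disjoint from $Q$). The upper bound then follows by covering $B(x,r)$ with $O_{d}(1)$ dyadic cubes of side $R \sim r$, and the lower bound by using $x \in \spt \mu$ to locate a single dyadic cube of side $R \sim_{d} r$ meeting $P$ and contained in $B(x,r)$. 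For $r \leq \delta$, the ball meets at most $O_{d}(1)$ of the cubes $\overline{q_{i}}$, and on each the measure is $\tfrac{1}{N}(T_{q_{i}})_{\#}\lambda$; rescaling by $T_{q_{i}}$ converts the estimate into the $(s, O_{d}(1))$-regularity of $\lambda$ at scale $r/\delta$, while the prefactor obeys $\tfrac{1}{N} \in [C^{-1}\delta^{s}, C\delta^{s}]$. Multiplying these two contributions recovers $\mu(B(x,r)) \asymp r^{s}$ with constant $O_{d}(C) \subset O_{d}(C^{2})$. The worst constant $O_{d}(C^{2})$ thus comes from the large-scale regime, yielding the claim; the narrow transition band $r \sim \delta$ is absorbed by monotonicity of $r \mapsto \mu(B(x,r))$ at a further cost of $O_{d}(1)$.
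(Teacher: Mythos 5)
Your construction is essentially the paper's own proof: both place equally weighted, rescaled copies of a fixed Ahlfors $(s,O_{d}(1))$-regular model measure inside the dyadic $\delta$-cubes meeting $P$, and verify regularity by splitting at scale $\delta$ (the counting bound \eqref{form6} above that scale, the model measure's regularity below it, with the transition band absorbed by monotonicity). The only real difference is cosmetic: the paper uses cubes of side $\underline{\delta} \leq \delta/\sqrt{d}$ so that the support lies in the Euclidean $\delta$-neighbourhood of $P$, whereas you keep side $\delta$ and switch to the $\ell^{\infty}$ metric, which changes the statement rather than proving it verbatim -- the paper's rescaling is the cleaner fix for that detail.
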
 
 
 \begin{proof}
 Let $\underline{\delta} \leq \delta$ be the largest dyadic scale such that $\sqrt{d} \cdot \underline{\delta} \leq \delta$, thus $2^k \cdot \underline{\delta} \ge \delta$ where $k := \lfloor \log (2\sqrt{d}) \rfloor \sim_d 1$. Let $\nu$ be a fixed (but arbitrary) $(s,A)$-regular probability measure on $[0,1]^{d}$, where $A \geq 1$ is an absolute constant (for example a self-similar measure generated by two similitudes with contraction ratios $2^{-1/s}$, or $\nu := \delta_{0}$ if $s = 0$). Define the measure $\mu$ in the statement of the proposition as follows. For each $Q \in \mathcal{D}_{\underline{\delta}}(P)$, let $\nu_{Q} := \frac{1}{|P|_{\underline{\delta}}} \cdot T_{Q}\nu$, where $T_{Q} \colon [0,1]^{d} \to Q$ is a rescaling map. Set
\begin{displaymath} \mu := \sum_{Q \in \mathcal{D}_{\underline{\delta}}(P)} \nu_{Q}. \end{displaymath}
Clearly, $\spt \mu \subset \mathcal{D}_{\udelta} (P)$ is contained within the $\sqrt{d} \cdot \underline{\delta} \le \delta$-neighbourhood of $P$.
We observe the following preliminary properties:
\begin{enumerate}[(P1)]
    \item\label{item:P1} $\mu([0,1]^{d}) = 1$;

    \item\label{item:P2} $\mu(Q) = \frac{|P \cap Q|_{\underline{\delta}}}{|P|_{\underline{\delta}}}$ for all $Q \in \mathcal{D}_{r}(P)$, $r \in [\underline{\delta}, 1] \cap 2^{-\N}$;

    \item\label{item:P3} $\mu(Q \cap B(x, r)) \lesssim_d A C r^s$ for all $Q \in \mathcal{D}_{\underline{\delta}}(P)$, $x \in \spt \mu$, and $r \le \sqrt{d} \cdot \udelta$, and additionally $\mu(Q \cap B(x, r)) \gtrsim_d (AC)^{-1} r^s$ if $x \in Q \cap \spt \mu$.
\end{enumerate}
Items \ref{item:P1} and \ref{item:P2} are clear. For \ref{item:P3}, we first record the important fact that
\begin{equation}\label{eqn:estimate P}
    C^{-1} \delta^{-s} \lesssim_d |P|_{\underline{\delta}} \lesssim_d C \delta^{-s}.
\end{equation}
Indeed, we first have $|P|_{2^k \udelta} \le |P|_{\udelta} \le 2^{kd} |P|_{2^k \udelta}$, and $|P|_{2^k \udelta}$ can in turn be estimated using \eqref{form6} with $r = 2^k \udelta$ and $R = 1$. Now, fix $Q \in \mathcal{D}_{\udelta} (P)$ and $x \in \spt \mu$: if $Q \cap B(x, r) = \emptyset$, then the first part of \ref{item:P3} holds trivially. If on the other hand $Q \cap B(x, r) \ni y$, then using the Ahlfors $(s,A)$-regularity of $\nu_Q$,
\begin{displaymath}
    \mu(Q \cap B(x, r)) \le \mu(Q \cap B(y, 2r)) \leq \frac{1}{|P|_{\udelta}} \cdot A \left(\frac{2r}{\udelta} \right)^s \stackrel{\eqref{eqn:estimate P}}{\lesssim_d} ACr^s.
\end{displaymath}
The second part of \ref{item:P3} is an easier application of the Ahlfors $(s,A)$-regularity of $\nu_{Q}$, and \eqref{eqn:estimate P}. Henceforth, we absorb the absolute constant $A$ into the implicit constants.

We first check the upper inequality for the $(s,O_{d}(C^{2}))$-regularity of $\mu$:
 \begin{equation}\label{form8} \mu(B(x,t)) \lesssim_{d} C^{2}t^{s}, \qquad x \in \spt \mu, \, t \le \sqrt{d}. \end{equation}

 Let first $x \in \R^{d}$ and $\underline{\delta} \leq t \leq \sqrt{d}$. Let $\underline{t} := \min\{ \max\{ t, \delta \}, 1 \} \sim_d t$, and pick $\ell \in (\underline{t}, 2\underline{t}] \cap 2^{-\N}$. Then $B(x,t)$ may be covered by $C_{d}$ dyadic cubes $Q$ of side-length $\ell(Q) = \ell$. Each of those cubes $Q$ has $\mu$ measure
 \begin{displaymath} \mu(Q) \stackrel{\textup{\ref{item:P2}}}{=} \frac{|P \cap Q|_{\underline{\delta}}}{|P|_{\underline{\delta}}} \le \frac{2^{kd} |P \cap Q|_{2^k\underline{\delta}}}{|P|_{\underline{\delta}}} \lesssim_{d} C^{2}\ell(Q)^{s} \lesssim_{d} C^{2} \underline{t}^{s} \lesssim_d C^2 t^s, \end{displaymath} 
 using \eqref{eqn:estimate P} and \eqref{form6} with $r = 2^k \underline{\delta}$ and $R = \ell(Q)$. Thus, $\mu(B(x,t)) \lesssim_{d} C^{2}t^{s}$.

 If $t \le \underline{\delta}$, then $B(x, t)$ can only intersect $\lesssim_d 1$ many cubes in $\mathcal{D}_{\underline{\delta}} (P)$. For each such $Q$, we use \ref{item:P3} to obtain $\mu(Q \cap B(x, t)) \lesssim_d C t^s$. Thus, $\mu(B(x, t)) \lesssim_d C t^s$ as well.
 
 Finally, we check lower inequality for the $(s,O_{d}(C^{2}))$-regularity of $\mu$:
 \begin{equation}\label{form8'}
     C^{-2}t^{s} \lesssim_{d} \mu(B(x,t)), \qquad x \in \spt \mu, \, t \le \sqrt{d}.
 \end{equation}
 Fix $x \in \spt \mu$ and first suppose $2\delta\sqrt{d} \leq t \leq 1$. Then $B(x,t)$ contains a dyadic cube $Q \in \mathcal{D}_{r}(P)$ with $r \in [\frac{t}{2\sqrt{d}}, \frac{t}{\sqrt{d}}] \cap 2^{-\N}$. Therefore, we find by \eqref{eqn:estimate P},
 \begin{displaymath} \mu(B(x,t)) \geq \mu(Q) = \frac{|P \cap Q|_{\underline{\delta}}}{|P|_{\underline{\delta}}} \ge \frac{|P \cap Q|_{2^k \underline{\delta}}}{|P|_{\underline{\delta}}} \gtrsim_{d} C^{-2}\ell(Q)^{s} \gtrsim_{d} C^{-2}t^{s}. \end{displaymath} 
 Finally, if $t \leq 2\delta \sqrt{d}$, then let $\underline{t} = \min\{ t, \delta \sqrt{d} \} \sim t$. We have $\mu(B(x, t)) \ge \mu(B(x, \underline{t})) \gtrsim_d C^{-1} t^s$ by applying \ref{item:P3} to the dyadic cube $Q \in \mathcal{D}_{\underline{\delta}} (P)$ containing $x$. In either case, we have proven \eqref{form8}.
\end{proof}
 
In the following, for $\m \in \N_{\geq 2}$, $r \in \m^{\Z}$, and $A \subset \R^{d}$, the notation $\mathcal{D}_{r}^{\m}(A)$ stands for the $\m$-adic cubes of side-length $r$ in $\R^{d}$ intersecting $A$. We also write $|A|_{r}^{\m} := |\mathcal{D}_{r}^{\m}(A)|$. Note the following comparison between dyadic and $\m$-adic covering numbers: if $r \in 2^{\Z}$ and $\underline{r} \in \m^{\Z}$ with $\underline{r} \leq r$, then
 \begin{equation}\label{form18} |A|_{r} \lesssim_{d} |A|_{\underline{r}}, \qquad A \subset \R^{d}, \end{equation} 
since every element of $\mathcal{D}^{\m}_{\underline{r}}(A)$ can be covered by $\lesssim_{d} 1$ elements of $\mathcal{D}_{r}(A)$.
 
\begin{definition}[Uniform sets]\label{def:uniformity} Let $\m,n \geq 2$, and let $\{\Delta_{j}\}_{j = 0}^{n} \subset \m^{-\N}$ with
\begin{displaymath} \Delta_{n} < \Delta_{n - 1} < \ldots < \Delta_{1} \leq \Delta_{0} := 1. \end{displaymath}
A set $P \subset [0,1]^{d}$ is called \emph{$\{\Delta_j,\m\}_{j=0}^{n - 1}$-uniform} if there is a sequence $\{N_j\}_{j=0}^{n - 1} \subset \N^{n}$ such that $|P \cap Q|^{\m}_{\Delta_{j + 1}} = N_j$ for all $j\in \{0,\ldots,n - 1\}$ and all $Q\in\mathcal{D}_{\Delta_{j}}^{\m}(P)$. \end{definition}

\begin{lemma}\label{lemma2} Let $\m \in \N$, $\Delta \in \m^{-\N}$, and let $P \subset [0,1)^{d}$ be $\{\Delta^{j},\m\}_{j = 0}^{n - 1}$-uniform. Assume moreover that $N_{j} \equiv \Delta^{-s}$ for all $0 \leq j \leq n - 1$. Then $P$ is $(s,O_{d}(\Delta^{-3s}))$-regular between scales $\delta := \Delta^{n}$ and $1$. \end{lemma}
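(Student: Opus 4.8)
The plan is to run the whole argument inside the $\m$-adic grid along the geometric sequence of scales $\{\Delta^{j}\}$, where the hypothesis $N_{j}\equiv\Delta^{-s}$ produces \emph{exact} counts, and to pass to the dyadic grid only at the very end through the elementary bounded-overlap comparison \eqref{form18} and its converse. The engine is the following consequence of iterating the uniformity condition: for every $Q\in\mathcal{D}^{\m}_{\Delta^{a}}(P)$ and every $b\geq a$,
\[
|P\cap Q|^{\m}_{\Delta^{b}}=\prod_{j=a}^{b-1}N_{j}=\Delta^{-s(b-a)}=(\Delta^{a}/\Delta^{b})^{s}.
\]
Thus $P$ is \emph{exactly} $(s,1)$-regular as long as the two scales and the cube $Q$ are all taken along the sequence $\{\Delta^{j}\}$; everything else is rounding.

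For the upper bound, I would fix dyadic $r\leq R$ in $[\delta,1]$ and $Q\in\mathcal{D}_{R}(P)$, and bracket them by powers of $\Delta$: choose $\Delta^{p}\geq R>\Delta^{p+1}$ and $\Delta^{q}\leq r<\Delta^{q-1}$, so that $\Delta^{p}\leq\Delta^{-1}R$ and $\Delta^{q}\geq\Delta r$. Since $\Delta^{p}\geq R$, the dyadic cube $Q$ meets at most $2^{d}$ cubes $\tilde{Q}_{1},\dots,\tilde{Q}_{K}\in\mathcal{D}^{\m}_{\Delta^{p}}(P)$, and then \eqref{form18} (applied with the $\m$-adic scale $\Delta^{q}\leq r$) together with the exact count gives
\[
|P\cap Q|_{r}\lesssim_{d}|P\cap Q|^{\m}_{\Delta^{q}}\leq\sum_{i=1}^{K}|P\cap\tilde{Q}_{i}|^{\m}_{\Delta^{q}}=K\,(\Delta^{p}/\Delta^{q})^{s}\lesssim_{d}\Delta^{-2s}(R/r)^{s}.
\]
This already yields the upper inequality with constant $O_{d}(\Delta^{-2s})$, and it is the robust half of the argument.

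For the lower bound, where the extra factor in $\Delta^{-3s}$ is spent, the natural route is to locate a single $\m$-adic cube $W\in\mathcal{D}^{\m}_{\Delta^{p+2}}(P)$ with $W\subseteq Q$. Writing $\bar{r}=\Delta^{q'}$ for the smallest power of $\Delta$ with $\bar{r}\geq r$ (so $\bar{r}\leq\Delta^{-1}r$), the converse of \eqref{form18} and the exact count then give
\[
|P\cap Q|_{r}\gtrsim_{d}|P\cap Q|^{\m}_{\bar{r}}\geq|P\cap W|^{\m}_{\bar{r}}=(\Delta^{p+2}/\bar{r})^{s}\geq(\Delta^{2}R/\Delta^{-1}r)^{s}=\Delta^{3s}(R/r)^{s}.
\]
The side length $\Delta^{p+2}\leq\Delta R\leq R$ is exactly what makes the inclusion $W\subseteq Q$ geometrically possible, while the degenerate range $q'<p+2$ (i.e. $R/r\lesssim\Delta^{-1}$) is dispatched trivially by $|P\cap Q|_{r}\geq1\geq\Delta^{3s}(R/r)^{s}$. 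The two roundings $R\rightsquigarrow\Delta^{p+2}$ and $r\rightsquigarrow\bar{r}$ produce precisely the constant $O_{d}(\Delta^{-3s})$ claimed.

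I expect the main obstacle to be exactly the production of the sub-cube $W\subseteq Q$ meeting $P$. A dyadic cube $Q$ need not be aligned with the $\m$-adic grid, so an arbitrary point of $P\cap Q$ may sit within $\Delta^{p+2}$ of $\partial Q$, in which case no full $\m$-adic $\Delta^{p+2}$-cube inside $Q$ is forced to meet $P$. The key sub-step is therefore to exhibit a point of $P\cap Q$ lying deep inside $Q$, at distance $\gtrsim R$ from $\partial Q$; once such a point is found, the $\m$-adic cube of side $\Delta^{p+2}$ around it lies in $Q$, and the exact count closes the estimate. This grid-alignment issue is the only genuinely delicate point, and it is where the budget for the extra power of $\Delta^{-s}$ (relative to the $\Delta^{-2s}$ of the upper bound) is consumed.
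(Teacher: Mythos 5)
Your upper bound is correct and is essentially the paper's own argument: cover $P\cap Q$ by $O_{d}(1)$ $\m$-adic cubes of side $\approx\Delta^{-1}R$, use the exact product count, and convert between grids with \eqref{form18}; likewise your trivial treatment of the range $R/r\lesssim\Delta^{-2}$ matches the paper. The problem is the main case of the lower bound, at exactly the sub-step you flag and then leave open: exhibiting an $\m$-adic cube $W\subseteq Q$ with $W\cap P\neq\emptyset$, equivalently a point of $P\cap Q$ at distance $\gtrsim\Delta^{2}R$ from $\partial Q$. This is not a technicality that can be paid for with one more factor of $\Delta^{-s}$: no such point need exist, for any budget, and in fact the lemma as literally stated (for arbitrary dyadic $Q\in\mathcal{D}_{R}(P)$) is false. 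Concretely, take $d=1$, $\m=3$, $s=\log_{3}2$ (so $N_{j}\equiv\Delta^{-s}=2$), and let $P$ be the self-similar set with base-$3$ digits in $\{0,1\}$ --- a set of exactly the form in Corollary \ref{lemma1}. Then $\max P=\sum_{j\geq1}3^{-j}=\tfrac12$ and $\tfrac12\in P$, so the dyadic interval $Q=[\tfrac12,1)$ belongs to $\mathcal{D}_{1/2}(P)$ but satisfies $P\cap Q=\{\tfrac12\}$, whence $|P\cap Q|_{r}=1$ for every $r$. The claimed bound $|P\cap Q|_{r}\gtrsim_{d}\Delta^{3s}(R/r)^{s}$ with $R=\tfrac12$ and dyadic $r$ close to $\delta=\Delta^{n}$ equals $c\,2^{-s}\Delta^{3s}\cdot 2^{n}$, which exceeds $1$ once $n$ is large; contradiction. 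So your lower-bound scheme cannot be completed as posed.

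You are in good company: the paper's own proof has the same gap, hidden rather than flagged. It takes $\underline{Q}$ to be the $\m$-adic cube of side $\Delta^{a+1}$ containing the \emph{centre} of $Q$ and applies the exact count \eqref{form7} to $\underline{Q}$, which tacitly assumes $\underline{Q}\cap P\neq\emptyset$; the centre of $Q$ need not be anywhere near $P$, and in the example above every $\m$-adic cube contained in $[\tfrac12,1)$ misses $P$ entirely. The statement, and both your argument and the paper's essentially verbatim, become correct if the lower bound is formulated for cubes or balls \emph{centred at points of} $P$, e.g. $|P\cap B(x,R)|_{r}\gtrsim_{d}\Delta^{2s}(R/r)^{s}$ for $x\in P$: the $\m$-adic cube of side $\approx\Delta R/\sqrt{d}$ containing $x$ then automatically meets $P$ (it contains $x$) and lies inside $B(x,R)$, which is precisely the "deep point" you were missing. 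This centred version is also what the application genuinely needs, since Ahlfors regularity in Proposition \ref{prop1} concerns balls centred on $\spt\mu$, which lies in a small neighbourhood of $P$; the proof of Proposition \ref{prop1} then requires the matching adjustment of choosing the lower-bound cube around a point of $P$ rather than an arbitrary cube of $\mathcal{D}_{r}(P)$ inside $B(x,t)$.
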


\begin{proof} Clearly, if $a \in \{0,\ldots,n - 1\}$ and $Q \in \mathcal{D}_{\Delta^{a}}(P)$, then
\begin{equation}\label{form7} |P \cap Q|_{\Delta^{b}}^{\m} = (\Delta^{a - b})^{s}, \qquad b \in \{a,\ldots,n\}.  \end{equation}
This looks like \eqref{form6}, except that \eqref{form6} is a statement about dyadic covering numbers and dyadic scales $(r,R)$, whereas \eqref{form7} only concerns the special (not necessarily dyadic) pairs $(r,R) = (\Delta^{b},\Delta^{a})$. For $(r,R)$ with $r,R \in 2^{-\N}$ and $\delta \leq r \leq R \leq 1$, choose $a,b \in \{1,\ldots,n\}$ such that $a \leq b$ and $\Delta^{b} \leq r \leq \Delta^{b - 1}$ and $\Delta^{a} \leq R \leq \Delta^{a - 1}$. For $Q \in \mathcal{D}_{R}(P)$, note that $P \cap Q$ can be covered by $C_{d}$ elements $\overline{Q} \in \mathcal{D}^{\m}_{\Delta^{a - 1}} (P)$. Moreover, for each such $\overline{Q} \in \mathcal{D}^{\m}_{\Delta^{a - 1}} (P)$,
\begin{displaymath} |P \cap \overline{Q}|_{r} \stackrel{\eqref{form18}}{\lesssim_{d}} |P \cap \overline{Q}|^{\m}_{\Delta^{b}} \stackrel{\eqref{form7}}{=} (\Delta^{a - 1 - b})^{s} \leq \Delta^{-2s}(R/r)^{s}. \end{displaymath} 
To prove the matching lower bound, first assume that $r,R \in [\Delta^{a},\Delta^{a - 2}]$ for some $a \in \{2,\ldots,n\}$. In this case $R/r \leq \Delta^{-2}$ and for $Q \in \mathcal{D}_{R}(P)$ we may use the trivial estimate $|P \cap Q|_{r} \geq 1 \geq \Delta^{2s}(R/r)^{s}$. Assume finally that $r \in [\Delta^{b},\Delta^{b - 1}]$ and $R \in [\Delta^{a},\Delta^{a - 1}]$ for some $a \leq b - 2$. Fix $Q \in \mathcal{D}_{R}(P)$. Since $R \geq \Delta^{a}$, certainly $R \geq 2\Delta^{a + 1}$. This implies that we can find a cube $\underline{Q} \in \mathcal{D}^{\m}_{\Delta^{a + 1}}$ with $\underline{Q} \subset Q$ (take the element of $\mathcal{D}^{\m}_{\Delta^{a + 1}}$ containing the centre of $Q$). Since $a + 1 \leq b - 1$,
\begin{displaymath} |P \cap Q|_{r} \stackrel{\eqref{form18}}{\gtrsim_{d}} |P \cap \underline{Q}|^{\m}_{\Delta^{b - 1}} \stackrel{\eqref{form7}}{=} (\Delta^{a - b + 2})^{s} \geq \Delta^{3s}(R/r)^{s},  \end{displaymath}
as claimed. \end{proof}

\begin{cor}\label{lemma1} Let $s \in [0,1]$, and let $\m \in \N_{\geq 2}$ be such that also $\m^{s} \in \N$. For each $0 \leq j \le \m^{s}-1$, pick a number $t_{j} := \m^{-1} k_{j}$ in such a way that $k_{j} \in \{0,\ldots,\m - 1\}$, and $k_{i} \neq k_{j}$ for $i \neq j$. Let $S \subset [0,1]$ be the self-similar set generated by the similitudes  $\psi_{j} \colon [0,1] \to [0,1]$, 
\begin{displaymath} \psi_{j}(x) = \m^{-1} x + t_{j}, \qquad 0 \leq j \leq \m^{s}-1. \end{displaymath}
Then $S$ is $(s,O(\m^{3s}))$-regular between the scales $\m^{-n}$ and $1$, for all $n \in \N$. \end{cor}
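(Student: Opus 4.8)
The plan is to recognise $S$ as a uniform set in the sense of Definition~\ref{def:uniformity} and then feed it into Lemma~\ref{lemma2}. Accordingly I set $\Delta := \m^{-1} \in \m^{-\N}$ and work in dimension $d = 1$. Note that $\Delta^{-s} = \m^{s} \in \N$ by hypothesis, so the target branching number $N_{j} \equiv \m^{s}$ is a genuine positive integer, as required by the definition of uniformity.

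First I would unwind the $\m$-adic structure of $S$. For a word $w = (w_{1},\dots,w_{\ell}) \in \{0,\dots,\m^{s}-1\}^{\ell}$, write $\psi_{w} := \psi_{w_{1}} \circ \cdots \circ \psi_{w_{\ell}}$. A direct computation from $\psi_{j}(x) = \m^{-1}x + \m^{-1}k_{j}$ gives
\[ \psi_{w}([0,1]) = \Big[ \sum_{i=1}^{\ell} k_{w_{i}} \m^{-i}, \; \sum_{i=1}^{\ell} k_{w_{i}} \m^{-i} + \m^{-\ell} \Big], \]
which is exactly an $\m$-adic interval of side-length $\m^{-\ell}$ (its left endpoint is $m\m^{-\ell}$ for the integer $m = \sum_{i} k_{w_{i}}\m^{\ell - i} \in \{0,\dots,\m^{\ell}-1\}$). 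The crucial point is that $w \mapsto \psi_{w}([0,1])$ is injective: the left endpoint has base-$\m$ expansion $0.k_{w_{1}}k_{w_{2}}\cdots k_{w_{\ell}}$ with digits in $\{0,\dots,\m-1\}$, so the digits $k_{w_{i}}$ are read off from the interval, and since $j \mapsto k_{j}$ is injective \emph{by hypothesis}, the word $w$ is recovered.

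Since $\psi_{w}(S)$ is a nonempty subset of $\psi_{w}([0,1])$ and the intervals $\{\psi_{w}([0,1]) : |w| = \ell\}$ are distinct $\m$-adic cubes (hence have disjoint interiors), I would conclude that for every $\ell$ the elements of $\mathcal{D}^{\m}_{\m^{-\ell}}(S)$ are precisely these intervals, and that each $\psi_{w}([0,1])$ has exactly the $\m^{s}$ children $\{\psi_{wj}([0,1]) : 0 \le j \le \m^{s}-1\}$ meeting $S$ at side-length $\m^{-(\ell+1)}$. This is exactly the statement that $S$ is $\{\m^{-j},\m\}_{j=0}^{n-1}$-uniform with $N_{j} \equiv \m^{s} = \Delta^{-s}$, for every $n \in \N$. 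Applying Lemma~\ref{lemma2} with this $\m$, with $\Delta = \m^{-1}$, and with $P = S$ then yields that $S$ is $(s, O_{1}(\Delta^{-3s})) = (s, O(\m^{3s}))$-regular between the scales $\delta = \Delta^{n} = \m^{-n}$ and $1$; since $n$ is arbitrary this is the assertion.

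The main obstacle is genuinely only the combinatorial identification in the second step, where the distinctness of the digits $k_{j}$ is the one hypothesis that is actually used; everything else is bookkeeping. The only point requiring a word of care is a harmless boundary issue: if $\m - 1$ is among the chosen digits then the fixed point $1$ lies in $S$, so that $S \not\subset [0,1)$ as Lemma~\ref{lemma2} nominally demands. Since $1$ is an accumulation point of $S$, deleting it changes no $\m$-adic covering number, and likewise the finitely many shared $\m$-adic endpoints between adjacent cubes $\psi_{w}([0,1])$ do not affect the counts $|S \cap Q|^{\m}_{r}$; I would therefore simply apply the lemma to $S \cap [0,1)$.
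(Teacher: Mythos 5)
Your overall route is exactly the paper's: the paper's entire proof consists of the assertion that $S$ is $\{\m^{-j},\m\}_{j=0}^{n-1}$-uniform with $N_{j} \equiv \m^{s}$ followed by the citation of Lemma~\ref{lemma2}, and your cylinder bookkeeping (the identification of $\psi_{w}([0,1])$ with $\m$-adic intervals, and the injectivity of $w \mapsto \psi_{w}([0,1])$ via the distinctness of the $k_{j}$) correctly supplies the details the paper leaves implicit.

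However, the boundary discussion -- the one place where you go beyond the paper -- contains a genuine error. The problem is neither the point $1$ nor endpoints shared by two \emph{adjacent cylinders}; it is points of $S$ whose digit expansion is eventually $\m - 1$, which lie in $\m$-adic intervals that are \emph{not} cylinders. Such points exist precisely when $\m - 1 \in \{k_{j}\}$, which the statement allows. Concretely, take $\m = 4$, $s = \tfrac12$, $k_{0} = 0$, $k_{1} = 3$. Then $\tfrac14 = 0.0\overline{3}$ (base $4$) belongs to $S$, and the $4$-adic interval $[\tfrac14,\tfrac12)$ meets $S$ (in the single point $\tfrac14$) without being a cylinder. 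Hence $\mathcal{D}^{\m}_{1/4}(S \cap [0,1)) = \{[0,\tfrac14),\,[\tfrac14,\tfrac12),\,[\tfrac34,1)\}$ has $3$ elements instead of $\m^{s} = 2$, and one level down the counts inside these three intervals are $3,1,3$; so $S \cap [0,1)$ is not uniform for \emph{any} branching sequence, and your claim that excising $1$ restores the counts is false. Worse, the cube $Q = [\tfrac14,\tfrac12)$ (under either the half-open or the closed convention) satisfies $|S \cap Q|_{r} \equiv 1$, so the lower bound in \eqref{form6} fails for small $r$: read literally, the corollary itself is false when $\m - 1$ is a chosen digit -- a blind spot your argument shares with, indeed inherits from, the paper's one-line proof. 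The correct repair is to note that when $\m - 1 \notin \{k_{j}\}$ no point of $S$ has an expansion ending in all $(\m-1)$'s: then the tail of any expansion is $\leq \frac{\m-2}{\m-1}\m^{-\ell} < \m^{-\ell}$, so every point of $S$ lies in the \emph{half-open} cylinder determined by its digits, $S \subset [0,1)$ automatically (as $\max S \leq \frac{\m-2}{\m-1}$), the elements of $\mathcal{D}^{\m}_{\m^{-\ell}}(S)$ really are exactly the cylinders, and your argument is complete with no excision needed. This restricted case suffices for the paper, since in Lemma~\ref{mainLemma} the digit sets are $\{j\m^{(1-\tau)/2}\}$ and $\{j\m^{(1+\tau)/2}\}$ with $\tau < 1$, which never contain $\m - 1$.
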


\begin{proof} The set $S$ is $\{\m^{-j},\m\}_{j = 0}^{n - 1}$-uniform with branching numbers $N_{j} \equiv \m^{s}$, so the claim follows immediately from Lemma \ref{lemma2}.  \end{proof}

\section{The main construction}\label{s:main}

The next proposition contains the construction used to prove both Theorems \ref{main1}-\ref{main2}. In fact, the proposition implies Theorem \ref{main1} immediately (take $K := A \times B$), but Theorem \ref{main2} will require an additional argument. In the statement, a Borel measure $\nu$ on $\R^{d}$ is called \emph{$(\tau,C)$-Frostman} if $\nu(B(x,r)) \leq Cr^{\tau}$ for all $x \in \R^{d}$ and $r > 0$.

\begin{proposition}\label{mainProp} For every $0 < \tau < t \leq 1$ with $\tau,t \in \Q$, there exist $\delta_{0} = \delta_{0}(t,\tau) > 0$ such that the following holds for all $\delta \in (0,\delta_{0}]$. There exist an absolute constant $C > 0$,
\begin{itemize}
\item a $(\tfrac{1 + \tau}{2},\log(1/\delta)^{37/(t - \tau)})$-regular probability measure $\mu_{A}$ with $A := \spt \mu_{A} \subset [0,1]$,
\item a $(\tfrac{1 - \tau}{2},\log(1/\delta)^{37/(t - \tau)})$-regular probability measure $\mu_{B}$ with $B := \spt \mu_{B} \subset [0,1]$,
\item a $(\tau,C)$-Frostman Borel probability measure $\nu$ with $\Theta := \spt \nu \subset [0,1]$
\end{itemize}
such that
\begin{equation}\label{form5} N_\delta (A + \Theta B) \leq \delta^{-(1 + t)/2}. \end{equation}
\end{proposition}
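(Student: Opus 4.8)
The plan is to realize $A$, $B$, and $\Theta$ as $\m$-adic self-similar (more precisely, uniform in the sense of Definition \ref{def:uniformity}) subsets of $[0,1]$ for a common base $\m$, and to read off the regularity of $\mu_{A},\mu_{B}$ from Lemma \ref{lemma2} together with Proposition \ref{prop1}. Since $t,\tau \in \Q$, I would first fix $\m$ to be a perfect power of a suitable integer, chosen so that all the digit counts needed below — namely $\m^{(1+\tau)/2}$, $\m^{(1-\tau)/2}$ and $\m^{\tau}$ — are integers; I would take $\m \approx \log(1/\delta)^{c_{0}}$ for an absolute $c_{0}$, and let $n \in \N$ be fixed by $\m^{-n} \approx \delta$. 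The point of making $\m$ a power of $\log(1/\delta)$ is that the regularity constant produced by Lemma \ref{lemma2} and Proposition \ref{prop1} is a fixed power of $\m$ (the power depending only on the period length introduced below), hence of the form $\log(1/\delta)^{O(1/(t-\tau))}$; choosing the implicit constants carefully should bring this under $\log(1/\delta)^{37/(t-\tau)}$. Via \eqref{form18} the covering statement \eqref{form5} reduces to an $\m$-adic digit count: it suffices to show $|A+\theta B|^{\m}_{\delta} \le \m^{n(1+t)/2}$ for every $\theta \in \Theta$.

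The construction itself I would organize periodically. Partition the $n$ scales into consecutive periods of length $\ell$ of order $1/(t-\tau)$ many $\m$-adic levels, and prescribe the branching of $A$, $B$, $\Theta$ identically on each period (so that $A,B$ are genuinely self-similar with contraction ratio $\m^{-\ell}$, as asserted in the remark following Theorem \ref{main2}). Within a period I would let $A$ branch at the uniform rate $\m^{(1+\tau)/2}$ per level, so $\dim A = (1+\tau)/2$; give $\Theta$ full (rate-one) branching on an equidistributed $\tau$-fraction of the coarse levels of the period, so that $\dim \Theta = \tau$ and, crucially, $\nu$ is Frostman with an \emph{absolute} constant (full branching produces no sub-$\m$-adic-scale loss, so the only contribution to the Frostman constant is the $O(1)$ equidistribution defect, independent of $\m$); and let $B$ branch on the complementary levels at the rate fixing $\dim B = (1-\tau)/2$. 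The digit sets of $A$ and $B$ I would take to be arithmetic-progression-like and commensurate, aligning the levels so that for a direction $\theta$, whose relevant digits are determined on the coarse levels of the period, the dilate $\theta B$ deposits its $\m$-adic digits at the very levels where $A$ already branches. The design goal is that the per-level digit sumset $D_{A} + \theta D_{B}$ has cardinality $\approx \m^{(1+t)/2}$ — i.e. adjoining $\theta B$ enlarges $A$'s digit count only by the factor $\m^{(t-\tau)/2}$, rather than by the full $|D_{B}| = \m^{(1-\tau)/2}$ — so that telescoping over all $n$ levels yields $|A+\theta B|^{\m}_{\delta} \le \m^{n(1+t)/2} = \delta^{-(1+t)/2}$.

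The main obstacle is exactly this alignment, carried out uniformly over all $\theta \in \Theta$. Multiplication by $\theta$ scrambles $\m$-adic digits across scales and can create carries, so the separation of scales — $\Theta$ branching only on coarse levels, $B$ only on the levels where its dilate $\theta B$ can be absorbed into $A$ — must be arranged so that for every admissible $\theta$ no uncontrolled carries occur and the digit sumsets collapse to the target size. Balancing the three prescribed dimensions against one another is what forces $\ell$ to be of order $1/(t-\tau)$ (one needs enough levels per period to realize the rate-one $\Theta$-branching on a $\tau$-fraction while still leaving $A,B$ their fractional rates with integer digit counts), and it is this $\ell$ that propagates into the exponent $37/(t-\tau)$ of the regularity constant via Lemma \ref{lemma2}. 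Finally, the stronger claim that even $\bigcup_{\theta \in \Theta}(A+\theta B)$ obeys \eqref{form5} would follow from the same scheme by verifying that the union $\bigcup_{\theta}(D_{A} + \theta D_{B})$ over the rate-one family of coarse $\theta$-digits still has cardinality $\le \m^{(1+t)/2}$ per level — an analogous but slightly more demanding digit count that the arithmetic structure of $D_{A},D_{B}$ is chosen to guarantee.
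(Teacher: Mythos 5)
Your scaffolding matches the paper's (a polylogarithmic base $\m \approx \log(1/\delta)^{c_{0}}$, uniform sets, regularity via Lemma \ref{lemma2} and Proposition \ref{prop1}, reduction of \eqref{form5} to a digit count), but the core of your construction is wrong, and the error is precisely at the step you defer as a ``design goal''. Giving $\Theta$ \emph{full} branching on a $\tau$-fraction of levels means that elements of $\Theta$ carry arbitrary integer digits $c \in \{0,\ldots,\m-1\}$ at the active levels, and multiplication by a generic such $c$ destroys any commensurability between the digit progression of $B$ and that of $A$: if $D_{A}$ is an AP of spacing $s_{A}$ and $D_{B}$ an AP of spacing $s_{B}$, then $cD_{B}$ is an AP of spacing $cs_{B}$, which for most $c$ equidistributes modulo $s_{A}$, so $|D_{A}+cD_{B}|$ is close to $|D_{A}||D_{B}|$ (capped only by the ambient range). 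Concretely, take $\tau = \tfrac12$, $A$ branching at rate $\m^{3/4}$ with digits in $\m^{1/4}\Z$, $B$ at rate $\m^{1/2}$ on even levels with digits in $\m^{1/2}\Z$, and $\theta = c/\m \in \Theta$ with $c$ generic: the products $cb_{j}$ contribute $\approx \m^{1/2}$ essentially unaligned values at each level, so each level below the first saturates at $\approx \m$ digits and $N_{\delta}(A + \theta B) \gtrsim \delta^{-1+o(1)}$, far above $\delta^{-(1+t)/2}$ when $t$ is close to $\tau$. No choice of ``arithmetic-progression-like'' digit sets for $A,B$ can repair this, because the multiplier $c$ is arbitrary; the issue is multiplicative scrambling of the lattice, not carries. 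The paper's construction is structurally different on exactly this point: \emph{all three} sets have AP-restricted digits at \emph{every} level, with spacings $\rho^{(1+\tau)/2}$, $\rho^{(1-\tau)/2}$, $\rho^{\tau}$ (where $\rho = \m^{-1}$) chosen so that $\rho^{\tau}\cdot\rho^{(1-\tau)/2} = \rho^{(1+\tau)/2}$. Then every digit product of $\Theta$ and $B$ lands on $A$'s own lattice, and $A + \Theta B$ is trapped in a single $\theta$-independent set $H$ whose only loss over $A$ itself is a per-level multiplicity inflation by $2n$, i.e.\ a total factor $(2n)^{n} \leq \delta^{-3\epsilon/2}$. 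This is also where the gap $t - \tau$ is actually spent: it enters through $\epsilon = (t-\tau)/6$ in the choice of base $\m = \log(1/\delta)^{1/\epsilon}$ (which is what makes $(2n)^{n}$ sub-polynomial and produces the exponent $37/(t-\tau)$), not through a period length $\ell \sim 1/(t-\tau)$ as in your plan.

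Two further gaps. First, \eqref{form5} concerns $A + \Theta B = \bigcup_{\theta \in \Theta}(A + \theta B)$, so the union over $\theta$ is the statement itself, not an optional strengthening; in your scheme, even if each per-$\theta$ sumset collapsed, taking the union over the $\approx \m^{\tau}$ free digit choices of $\theta$ at each active level could multiply the per-level count by $\m^{\tau}$, which ruins the bound, whereas in the paper the union is free because the containing set $H$ does not depend on $\theta$. Second, your stated motivation for full branching of $\Theta$ --- obtaining an absolute Frostman constant --- is moot: the paper's lattice-structured $\nu$ (equal weights on the intervals of length $\rho$ based at the multiples of $\rho^{\tau}$) is also $(\tau,C)$-Frostman with absolute $C$, as verified in the proof of Lemma \ref{mainLemma}. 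A minor additional point: Lemma \ref{lemma2} as stated requires the same branching number at every level, so your period-$\ell$ set $B$ (branching only on selected levels) would need a separate variant of that lemma, with a constant depending on the equidistribution defect of the active levels.
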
 

It would be convenient to assume that some powers of $\log(1/\delta)$ are integers. In such a case we can actually prove a version of Proposition \ref{mainProp} with slightly better constants:

\begin{lemma}\label{mainLemma} For every $0 < \tau < t \leq 1$ with $\tau,t \in \Q$, define $\epsilon := \frac{t - \tau}{6}$. There exist $\delta_{0} = \delta_{0}(\epsilon) > 0$ such that if $\delta \in (0,\delta_{0}]$ satisfies
\begin{equation}\label{form3} \log(1/\delta), (\log(1/\delta))^{1/\epsilon}, (\log (1/\delta))^{\tau/\epsilon}, (\log (1/\delta))^{(1 + \tau)/(2\epsilon)}, (\log (1/\delta))^{(1 - \tau)/(2\epsilon)} \in \N_{\ge 2},
\end{equation}
then there exists an absolute constant $C > 0$,
\begin{itemize}
\item a $(\tfrac{1 + \tau}{2},C\log(1/\delta)^{6/\epsilon})$-regular probability measure $\mu_{A}$ with $A := \spt \mu_{A} \subset [0,1]$,
\item a $(\tfrac{1 - \tau}{2},C\log(1/\delta)^{6/\epsilon})$-regular probability measure $\mu_{B}$ with $B := \spt \mu_{B} \subset [0,1]$,
\item a $(\tau,C)$-Frostman Borel probability measure $\nu$ with $\Theta := \spt \nu \subset [0,1]$
\end{itemize}
such that
\begin{equation}\label{form5'} N_\delta (A + \Theta B) \leq \delta^{-(1 + t)/2 + \epsilon/2}. \end{equation}
\end{lemma}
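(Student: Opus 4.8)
The plan is to realise $A,B,\Theta$ as explicit self-similar Cantor sets in a large base $\m$, arranged so that the product $\Theta\cdot B$ is "arithmetically absorbed" into a mildly fattened copy of $A$, while $\Theta$ stays spread out enough to carry an \emph{absolutely} Frostman measure. Set $\m := (\log(1/\delta))^{1/\epsilon}$ and $N := \lfloor \log_{\m}(1/\delta)\rfloor$, so $\delta \approx \m^{-N}$; by \eqref{form3} the numbers $\m,\m^{\alpha},\m^{\beta},\m^{\tau}$ are integers, where $\alpha := \tfrac{1+\tau}{2}$ and $\beta := \tfrac{1-\tau}{2}$, and crucially $\alpha + \beta = 1$. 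Writing $\gamma := \tfrac{1+t}{2}$, note $\gamma - \alpha = \tfrac{t-\tau}{2} = 3\epsilon$, so the target \eqref{form5'} reads $N_\delta(A+\Theta B)\le \m^{(\alpha + \frac{5}{2}\epsilon)N}$.

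First I would build the \emph{clustered} blocks $A_0,B_0,\Theta_0$: the base-$\m$ self-similar sets with digit sets $\{0,\dots,\m^{\alpha}-1\}$, $\{0,\dots,\m^{\beta}-1\}$, $\{0,\dots,\m^{\tau}-1\}$ (contraction $\m^{-1}$ at every level), and then rescale,
\[
A := \m^{1-\alpha}A_0 = \m^{\beta}A_0,\qquad B := \m^{1-\beta}B_0 = \m^{\alpha}B_0,\qquad \Theta := \m^{1-\tau}\Theta_0,
\]
all contained in $[0,1]$ since $\alpha+\beta=1$. Each is the self-similar set generated by the $\m^{s}$ similitudes $x\mapsto \m^{-1}x + j\m^{-s}$ ($s=\alpha,\beta,\tau$, $0\le j<\m^{s}$), i.e. an \emph{evenly spaced} Cantor set; the integrality of the rescaling factors $\m^{\beta},\m^{\alpha},\m^{1-\tau}$ is exactly what \eqref{form3} guarantees. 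Corollary \ref{lemma1} shows that $A_0$ (hence $A$) is $(\alpha,O(\m^{3}))$-regular between scales $\m^{-N}$ and $1$, and similarly for $B$; Proposition \ref{prop1} then upgrades these to honest Ahlfors $(\alpha,O(\m^{6}))$- and $(\beta,O(\m^{6}))$-regular probability measures $\mu_A,\mu_B$ (the squaring $C\mapsto O(C^{2})$ in Proposition \ref{prop1} is precisely what produces the exponent $6/\epsilon$).

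The only delicate points are the absolute Frostman property of $\nu$ and the covering estimate. For the former I would estimate $\nu(B(x,r))$ directly from the \emph{even} spacing of $\Theta$: for $\m^{-n}\le r< \m^{-(n-1)}$ the generation-$n$ cylinders of $\Theta$ are $\m^{-(n-1+\tau)}$-separated and each carries mass $\m^{-\tau n}$, and a one-line computation collapses the exponent to $0$, giving $\nu(B(x,r))\lesssim r^{\tau}$ with an \emph{absolute} constant. (Spreading is essential here: the clustered $\Theta_0$ has diameter $\m^{\tau-1}$ and hence forces a Frostman constant $\gtrsim \m^{\tau(1-\tau)}$.) For the covering bound I would use the algebraic identity, valid because $\alpha+\beta=1$ and $1-\tau+\alpha = 1+\beta$,
\[
A+\Theta B \;=\; \m^{\beta}A_0 + \m^{1+\beta}\,\Theta_0 B_0 \;=\; \m^{\beta}\bigl(A_0 + \m\,\Theta_0 B_0\bigr).
\]
The point is that after the coordinated rescaling the factor multiplying $\Theta_0 B_0$ is an \emph{integer power} of $\m$ (a clean level shift by $\m^{1}$), rather than the fractional shift $\m^{1-\tau}$ present before rescaling. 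A base-$\m$ digit count now finishes: the pre-carry digit of $\theta_0 b$ at level $\ell$ is $\sum_{i+j=\ell}(\theta_0)_i b_j \le \ell\,\m^{\tau+\beta}=\ell\m^{\alpha}$, so the level-$\ell$ pre-carry digit of $A_0+\m\,\Theta_0 B_0$ is at most $(\ell+2)\m^{\alpha}$, whence (after comparing $\m$-adic and dyadic counting via \eqref{form18})
\[
N_\delta(A+\Theta B)\;\lesssim\; \prod_{\ell=1}^{N+O(1)}(\ell+2)\m^{\alpha}\;\le\; \m^{\alpha N}\,(N+3)! \;\le\; \m^{(\alpha+2\epsilon)N},
\]
the last step because $(N+3)!\le \m^{(1+o(1))\epsilon N}$: indeed $\log_{\m}N\approx \epsilon$ since $\m=(\log 1/\delta)^{1/\epsilon}$ and $N\approx \epsilon\log(1/\delta)/\log\log(1/\delta)$. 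This is comfortably below the target $\m^{(\alpha+\frac{5}{2}\epsilon)N}$.

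I expect the main obstacle to be exactly the tension resolved above: $\Theta$ must be spread out in order to support an absolutely Frostman measure, yet $\Theta\cdot B$ must stay arithmetic enough to be swallowed by a fattened $A$. Naively spreading $\Theta$ by $\m^{1-\tau}$ blows the pre-carry digits of $\theta_0 b$ past $\m$ and destroys the count; only the simultaneous rescaling of all three sets — legitimised by $\alpha+\beta=1$ and the integrality hypotheses \eqref{form3} — turns the offending fractional factor into the harmless integer shift $\m^{1}$. Two routine verifications remain: that carries propagating into the top $N$ levels from below are negligible, which requires $\m^{1-\alpha}=\m^{\beta}\gg N$ and holds since $\tfrac{1-\alpha}{\epsilon}=\tfrac{3(1-\tau)}{t-\tau}\ge 3$; and that the natural measure $\nu$ on $\Theta$ is genuinely a probability measure with the claimed Frostman bound at all (not merely $\m$-adic) scales. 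Finally, since the estimate bounds the entire union $\bigcup_{\theta\in\Theta}(A+\theta B)=A+\Theta B$, it yields \eqref{form5'} uniformly in $\theta\in\Theta$.
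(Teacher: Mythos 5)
Your proposal is correct and essentially coincides with the paper's own proof: your rescaled sets $\m^{\beta}A_0$, $\m^{\alpha}B_0$, $\m^{1-\tau}\Theta_0$ are precisely the evenly-spaced self-similar sets the paper defines directly (with $\rho = \m^{-1}$), your identity $A+\Theta B=\m^{\beta}\bigl(A_0+\m\,\Theta_0B_0\bigr)$ is the paper's key relation $\tau+\tfrac{1-\tau}{2}=\tfrac{1+\tau}{2}$ in disguise, and your digit count with the factor $(N+3)!\le\m^{(1+o(1))\epsilon N}$ matches the paper's bound $|H|\le(2n\cdot\rho^{-(1+\tau)/2})^{n}$ with $(2n)^{n}\le\delta^{-3\epsilon/2}$. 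The remaining ingredients -- regularity via Corollary \ref{lemma1} combined with Proposition \ref{prop1}, and the absolute Frostman bound for the equal-weights measure on the spread-out $\Theta$ -- are likewise verified exactly as in the paper, so the ``clustered-then-rescaled'' presentation is only a cosmetic difference.
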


To deduce Proposition \ref{mainProp} from Lemma \ref{mainLemma}, observe that \eqref{form3} has solutions of the form 
\begin{displaymath} \delta = 2^{-n^K}, \qquad n \in \N_{\ge 2}, \end{displaymath}
where $K = K(t, \tau)$ is the least common multiple of the denominators of the rational numbers $\frac{1}{2\epsilon}$ and $\frac{\tau}{2\epsilon}$. Now given $\delta \in (0,2^{-2^{K}}]$, we may find $n \geq 2$ such that $2^{n^K} \le \delta^{-1} \le 2^{(n+1)^K}$. The key point is that for $n \ge n_{0} := n_0 (t, \epsilon, K)$,
\begin{displaymath} (n+1)^K \le \left(1 + \frac{\epsilon}{1+t-\epsilon}\right) \cdot n^K. \end{displaymath}
Thus, for $\delta \leq 2^{-n_{0}^{K}}$, we have $\delta \ge \delta' := 2^{-(n+1)^K} \ge \delta^{1+\epsilon/(1+t-\epsilon)}$. Finally, Lemma \ref{mainLemma} applied to $\delta'$ implies Proposition \ref{mainProp} for $\delta$, provided $\delta_0$ is small enough.

\begin{proof}[Proof of Lemma \ref{mainLemma}]
Choose $\delta_0 (\epsilon) > 0$ so small that for any $\delta < \delta_0$,
\begin{equation}\label{eqn:smallness}
    \log \log (1/\delta) \ge 4 \quad \text{ and } \quad \log(1/\delta) \le \delta^{-\epsilon^2/2}.
\end{equation}
We also define
\begin{displaymath} \ell(\delta) := \frac{\log \log (1/\delta)}{\epsilon \log(1/\delta)} \quad \text{and} \quad \rho := \delta^{\ell(\delta)} = \log(1/\delta)^{-1/\epsilon} \in (0,\tfrac{1}{2}]. \end{displaymath}
Consider the following three self-similar sets $A,B,\Theta \subset [0,1]$. Let $A \subset [0,1]$ be the self-similar set generated by the similitudes
\begin{displaymath} \varphi_{j}(x) := \rho x + j\rho^{(1 + \tau)/2}, \qquad 0 \leq j \leq \rho^{-(1 + \tau)/2} - 1. \end{displaymath} 
Let $B \subset [0,1]$ be the self-similar set generated by the similitudes
\begin{displaymath} \bar{\varphi}_{j}(x) := \rho x + j \rho^{(1 - \tau)/2}, \qquad 0 \leq j \leq \rho^{-(1 - \tau)/2} - 1. \end{displaymath}
Let $\Theta \subset [0,1]$ be the self-similar set generated by the similitudes
 \begin{equation}\label{form14} \psi_{j}(x) := \rho x + j\rho^{\tau}, \qquad 0 \leq j \leq \rho^{-\tau} - 1. \end{equation}
Note that $\mathfrak{m} = \rho^{-1} \in \N_{\geq 2}$ by hypothesis \eqref{form3}. Therefore, by Corollary \ref{lemma1} applied with $n := \lceil \ell(\delta)^{-1} \rceil$, $s := (1 + \tau)/2$, and $t_{j} = j\rho^{-(1 + \tau)/2} \in \N$, the set $A$ is $(\tfrac{1 + \tau}{2},C\log(1/\delta)^{3/\epsilon})$-regular between the scales $\rho^{n} \le \delta$ and $1$. Then, by Proposition \ref{prop1} the $\delta$-neighbourhood $[A]_{\delta}$ supports a $(\tfrac{1 + \tau}{2},C\log(1/\delta)^{6/\epsilon})$-regular probability measure. The proof below will show that \eqref{form5} holds for $[A]_{\delta}$, hence for $A' = \spt \mu$.
 
 By the same argument, $B$ is $(\tfrac{1 - \tau}{2},C\log(1/\delta)^{3/\epsilon})$-regular between scales $\delta$ and $1$, and $[B]_{\delta}$ supports a $(\tfrac{1 - \tau}{2},C\log(1/\delta)^{6/\epsilon})$-regular probability measure. 
 
 Regarding $\Theta$, we claim that the canonical equal-weights self-similar measure on $\nu$ on $\Theta$ satisfies $\nu(B(x,r)) \lesssim r^{\tau}$ for all $x \in [0,1]$ and $r > 0$. By self-similarity, it suffices to check this for $\rho \leq r \leq 1$. Note that $\nu$ is supported on the union of the intervals $I_{j} = [j\rho^{\tau},j\rho^{\tau + 1}]$ with $0 \leq j \leq \rho^{-\tau} - 1$, and $\nu(I_{j}) \equiv \rho^{\tau}$. Further, $B(x,r)$ intersects $\lesssim \max\{r/\rho^{\tau},1\}$ intervals $I_{j}$ for $\rho \leq r \leq 1$. Therefore $\nu(B(x,r)) \lesssim \max\{r,\rho^{\tau}\} \leq r^{\tau}$, as claimed.

We then begin to study the set $A + \Theta B$ from \eqref{form5}. To begin with, observe that $A,B$ and $\Theta$ can be expressed in the following way:
\begin{equation}\label{defA} A := \left\{ \sum_{j = 0}^{\infty} \rho^{j} \cdot k_{j}\rho^{(1 + \tau)/2} : k_{j} \in \N, \, 0 \leq k_{j} \leq \rho^{-(1 + \tau)/2} - 1 \right\}, \end{equation}
and
\begin{displaymath} B := \left\{ \sum_{j = 0}^{\infty} \rho^{j} \cdot l_{j}\rho^{(1 - \tau)/2} : l_{j} \in \N, \, 0 \leq l_{j} \leq \rho^{-(1 - \tau)/2} - 1 \right\}, \end{displaymath}
and
\begin{displaymath} \Theta := \left\{ \sum_{j = 0}^{\infty} \rho^{j} \cdot r_{j}\rho^{\tau} : r_{j} \in \N, \, 0 \leq r_{j} \leq \rho^{-\tau} - 1 \right\}. \end{displaymath} 
 We claim that $A + \Theta B$ is contained in the $O(\delta)$-neighbourhood of the following set $H$:
\begin{displaymath} H := \left\{\sum_{k = 0}^{n} \rho^{k} \cdot s_{k}\rho^{(1 + \tau)/2} : s_{k} \in \N, \, 0 \leq s_{k} \leq 2n \cdot \rho^{-(1 + \tau)/2} \right\},
\end{displaymath}
where still
\begin{displaymath} n = \lceil \ell(\delta)^{-1} \rceil = \lceil \epsilon \log(1/\delta)/\log \log(1/\delta) \rceil. \end{displaymath}
We note that since $\epsilon \le \frac{1}{2}$ and $\log(1/\delta) \ge 2 \log \log (1/\delta)$ (from \eqref{eqn:smallness}), we get
\begin{equation}\label{eqn:property of n}
    n \le \frac{\epsilon \log(1/\delta)}{\log \log(1/\delta)} + 1 \le \frac{\log(1/\delta)}{\log \log(1/\delta)}.
\end{equation}
Let us use this to complete the proof of \eqref{form5}. From \eqref{eqn:smallness}, \eqref{eqn:property of n}, and definition of $\rho$, we see that $\rho^{n} \ge \rho \cdot \delta \ge \delta^{1+\epsilon/2}$. Thus, we get the following estimate for the cardinality of $H$:
\begin{displaymath} |H| \leq (2n \cdot \rho^{-(1+ \tau)/2})^{n} \le (2n)^{n} \cdot \delta^{-(1 + \tau)/2 - \epsilon/2}. \end{displaymath}
Now, we have by \eqref{eqn:property of n} (since $\log \log (1/\delta) \ge 2$ from \eqref{eqn:smallness}),
\begin{displaymath} (2n)^{n} \le \left(\frac{2\log (1/\delta)}{\log \log (1/\delta)} \right)^{\epsilon \log (1/\delta)/\log \log (1/\delta) + 1} \leq \log(1/\delta)^{\epsilon \log(1/\delta)/\log \log (1/\delta) + 1} \le \delta^{-3\epsilon/2}. \end{displaymath}
Therefore, $(2n)^{n} \leq \delta^{-3\epsilon/2} \leq \delta^{3\epsilon/2 + (\tau - t)/2}$. This shows that $|H| \leq \delta^{-(1 + t)/2 + \epsilon}$. Since $N_\delta (A + \Theta B) \lesssim |H|$, finally $N_\delta (A + \Theta B) \leq \delta^{-(1/2 + t) + \epsilon/2}$ for $\delta > 0$ small enough.

To prove the inclusion $A + \Theta B \subset [H]_{C\delta}$, fix 
\begin{displaymath} \theta = \sum_{i = 0}^{\infty} \rho^{i} \cdot r_{i}\rho^{\tau} \quad \text{and} \quad b = \sum_{j = 0}^{\infty} \rho^{j} \cdot l_{j}\rho^{(1 - \tau)/2}, \end{displaymath}
Note that here $r_{i}\rho^{\tau} \leq 1$ and $l_{j}\rho^{(1 - \tau)/2} \leq 1$. Furthermore $\rho^{n} \le \delta$, and $\rho \leq \tfrac{1}{2}$, so
\begin{equation}\label{form9} \sum_{k \geq n + 1} \rho^{k} = \frac{\rho^{n + 1}}{1 - \rho} \leq 2\delta \cdot \rho \le \delta. \end{equation}
This shows that $\theta$ and $b$ lie $\delta$-close to their level-$n$ approximations
\begin{displaymath} [\theta]_{n} = \sum_{i = 0}^{n} \rho^{i} \cdot r_{i}\rho^{\tau} \quad \text{and} \quad [b]_{n} = \sum_{j = 0}^{n} \rho^{j} \cdot l_{j}\rho^{(1 - \tau)/2}. \end{displaymath} 
Since similarly $|a - [a]_{n}| \leq \delta$ for $a \in A$, it suffices to show that $[a]_{n} + [\theta]_{n}[b]_{n} \in [H]_{C\delta}$.

To get started with this, note that
\begin{displaymath} [\theta]_{n}[b]_{n} = \sum_{i,j = 0}^{n} \rho^{i + j} \cdot r_{i}l_{j} \rho^{(1 + \tau)/2} = \sum_{k = 0}^{2n} \rho^{k} \sum_{i + j = k} r_{i}l_{j}\rho^{(1 + \tau)/2}.  \end{displaymath} 
We claim that every sum of the form above is $\delta$-close to
\begin{equation}\label{form2} [\theta b]_{n} = \sum_{k = 0}^{n} \rho^{k} \sum_{i + j = k} r_{i}l_{j}\rho^{(1 + \tau)/2}. \end{equation} 
Write
\begin{displaymath} \theta b - [\theta b]_{n} = \sum_{k = n + 1}^{2n} \rho^{k} \sum_{i + j = k} r_{i}l_{j}\rho^{(1 + \tau)/2} =: R_{n}(b,\theta). \end{displaymath}
To see that $R_{n}(b,\theta) \leq \delta$, first observe that
\begin{displaymath} \left| \sum_{i + j = k} r_{i}k_{j}\rho^{(1+\tau)/2} \right| \leq k \cdot \max |r_{i}k_{j}\rho^{(1 + \tau)/2}| \leq 2n, \qquad k \in \{0,\ldots,2n\}, \end{displaymath} 
since $r_{i}l_{j}\rho^{(1 + \tau)/2} \leq 1$. Combining this with \eqref{form9}, and recalling \eqref{eqn:smallness}, \eqref{eqn:property of n}, we get
\begin{displaymath} R_{n}(b,\theta) \leq \delta \cdot \frac{4 n}{(\log(1/\delta))^{1/\epsilon}} \leq \delta \cdot \frac{4}{\log \log (1/\delta)} \leq \delta.  \end{displaymath}

Finally, note that every sum of the form appearing in \eqref{form2} is contained in the set
\begin{displaymath} H' := \left\{\sum_{k = 0}^{n} \rho^{k} \cdot s_{k}\rho^{(1 + \tau)/2} : s_{k} \in \N, \, 0 \leq s_{k} \leq n \cdot \rho^{-(1 + \tau)/2} \right\}. \end{displaymath} 
This is because $r_{i}l_{j} \in \{0,\ldots,\rho^{-(1 + \tau)/2}\}$ for every $(i,j)$ fixed, and the inner sum in \eqref{form2} contains $\leq k \leq n$ terms. We have now shown that $\Theta B \subset [H']_{C\delta}$. On the other hand, also $A \subset [H']_{\delta}$, because clearly $[a]_{n} \in H'$ for every $a \in A$ (recall \eqref{defA}). We have now shown that $A + \Theta B \subset [H']_{C\delta} + [H']_{C\delta} \subset [H]_{2C\delta}$, and the proof is complete. \end{proof} 

\section{Proof of Theorem \ref{main2}}\label{s4}

Fix $0 < \tau < t \leq 1$ with $t,\tau \in \Q$, and let $\delta > 0$ be so small that the conclusions of Proposition \ref{mainProp} hold. Let $A_{0},B_{0}$ be the self-similar sets from the proof of Proposition \ref{mainProp}, thus $A_{0} \subset [0,1]$ is the self-similar set generated by the similitudes
\begin{displaymath} \varphi_{j}(x) := \rho x + j\rho^{(1 + \tau)/2}, \qquad 0 \leq j \leq \rho^{-(1 + \tau)/2} - 1, \end{displaymath} 
and $B_{0} \subset [0,1]$ is the self-similar set generated by the similitudes
\begin{displaymath} \bar{\varphi}_{j}(x) := \rho x + j \rho^{(1 - \tau)/2}, \qquad 0 \leq j \leq \rho^{-(1 - \tau)/2} - 1. \end{displaymath}
Here, $\rho = (\log 1/\delta')^{-\epsilon}$, where $\epsilon := \frac{t-\tau}{6}$ and $\delta' \in [\delta^{1+\epsilon/(1+t-\epsilon)}, \delta]$ is the parameter in our deduction of Proposition \ref{mainProp} from Lemma \ref{mainLemma}. Since $(1 + \frac{\epsilon}{1+t-\epsilon})^{1/\epsilon} \le (1+\epsilon)^{1/\epsilon} \lesssim 1$, we obtain that $\rho \sim (\log 1/\delta)^{-6/(t - \tau)}$.

Note that $A_{0} \times B_{0} \subset [0,1]^{2}$ is also a self-similar set generated by $\rho^{-1}$ similitudes with contraction ratio $\rho$. Thus, for each $j \geq 1$, the set $A_{0} \times B_{0}$ is covered by a family $\mathcal{Q}_{j}$ of squares of side-length $\rho^{j}$, with cardinality $|\mathcal{Q}_{j}| = \rho^{-j}$, whose horizontal separation is $\rho^{j - 1} \cdot \rho^{(1 + \tau)/2}$ and whose vertical separation is $\rho^{j - 1} \cdot \rho^{(1 - \tau)/2}$. 

We start by claiming that the set
\begin{displaymath} K := A_{0} \times \rho^{(1 + \tau)/2}B_{0} =: A_{0} \times B \end{displaymath} 
is Ahlfors $(1,C)$-regular for an absolute constant $C > 0$. For each $j \geq 1$, note that $K$ is covered by the images of the squares $\mathcal{Q}_{j}$ under the map 
\begin{displaymath} T_{\rho}(x,y) := (x,\rho^{(1 + \tau)/2}y). \end{displaymath}
These images, denoted $\mathcal{R}_{j}$, are rectangles of dimensions $\rho^{j} \times \rho^{j} \cdot \rho^{(1 + \tau)/2}$, whose horizontal separation remains $\rho^{j - 1} \cdot \rho^{(1 + \tau)/2}$, but whose vertical separation is only $\rho^{j}$ (by horizontal and vertical separation we literally mean the horizontal and vertical separation of the centres of the rectangles). 

The set $K$ supports the measure $\mu := T_{\rho}\mu_{0}$, where $\mu_{0}$ is the canonical self-similar measure on $A \times B$. Note that $\mu(R) = \rho^{j}$ for each $R \in \mathcal{R}_{j}$. 

We now claim that $\mu(B(x,r)) \sim r$ for all $x \in K$ and $0 < r \leq 10$, where the implicit constants are absolute. Fix $0 < r \leq 10$, and let $j \geq 1$ such that $10\rho^{j} \leq r \leq 10\rho^{j - 1}$. 
\subsubsection*{Case where $10\rho^{j} \leq r \leq 10\rho^{j - 1} \cdot \rho^{(1 + \tau)/2}$} Any maximal family of rectangles in $\mathcal{R}_{j}$ such that the centres have a common $x$-coordinate is called a \emph{$j$-stack}. The horizontal separation of two distinct $j$-stacks is at least $\rho^{j - 1} \cdot \rho^{(1 + \tau)/2}$. Since $r \leq 10\rho^{j - 1} \cdot \rho^{(1 + \tau)/2}$, the disc $B(x,r)$ can meet $\leq 30$ $j$-stacks, see Figure \ref{fig1}.
\begin{figure}[h!]
\begin{center}
\begin{overpic}[scale = 0.7]{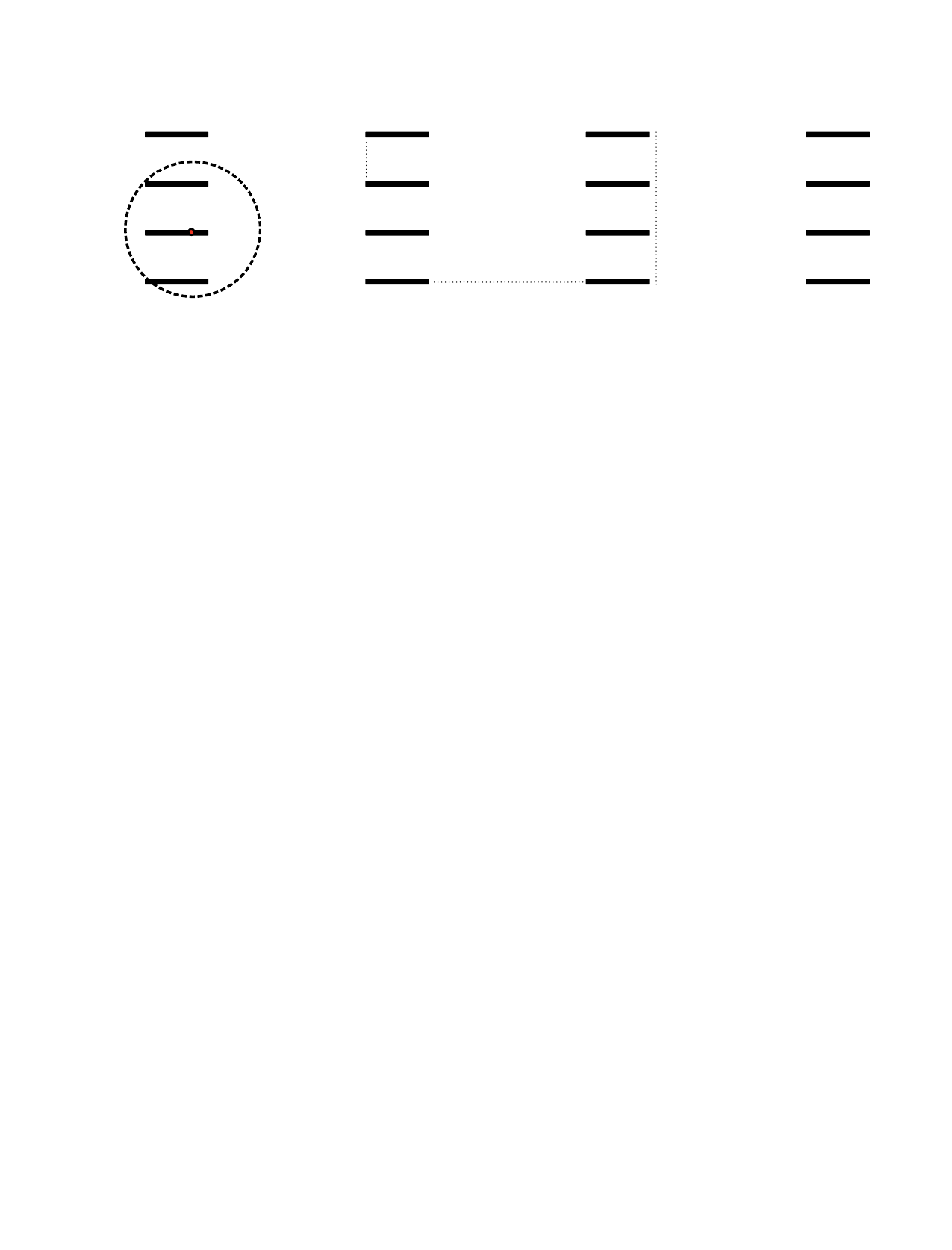}
\put(6,-3){$\rho^{j}$}
\put(29,17){$\rho^{j}$}
\put(43,3.5){\small{$\rho^{j - 1} \cdot \rho^{(1 + \tau)/2}$}}
\put(72,11){\small{$\rho^{j - 1} \cdot \rho^{(1 + \tau)/2}$}}
\end{overpic}
\caption{The case $10\rho^{j} \leq r \leq \rho^{j - 1} \cdot \rho^{(1 + \tau)/2}$.}\label{fig1}
\end{center}
\end{figure}
Since the vertical separation of $\mathcal{R}_{j}$-rectangles inside a fixed $j$-stack is $\rho^{j}$, the disc $B(x,r)$ meets $\lesssim r/\rho^{j}$ rectangles in a fixed $j$-stack. This gives the upper bound $\mu(B(x,r)) \lesssim \rho^{j} \cdot (r/\rho^{j}) = r$. But since $10\rho^{j} \leq r \leq \rho^{j - 1} \cdot 10\rho^{(1 + \tau)/2}$, also the converse holds: $B(x,r)$ contains $\gtrsim r/\rho^{j}$ rectangles in the $j$-stack containing $x$. Therefore $\mu(B(x,r)) \sim r$.
\begin{figure}[h!]
\begin{center}
\begin{overpic}[scale = 0.6]{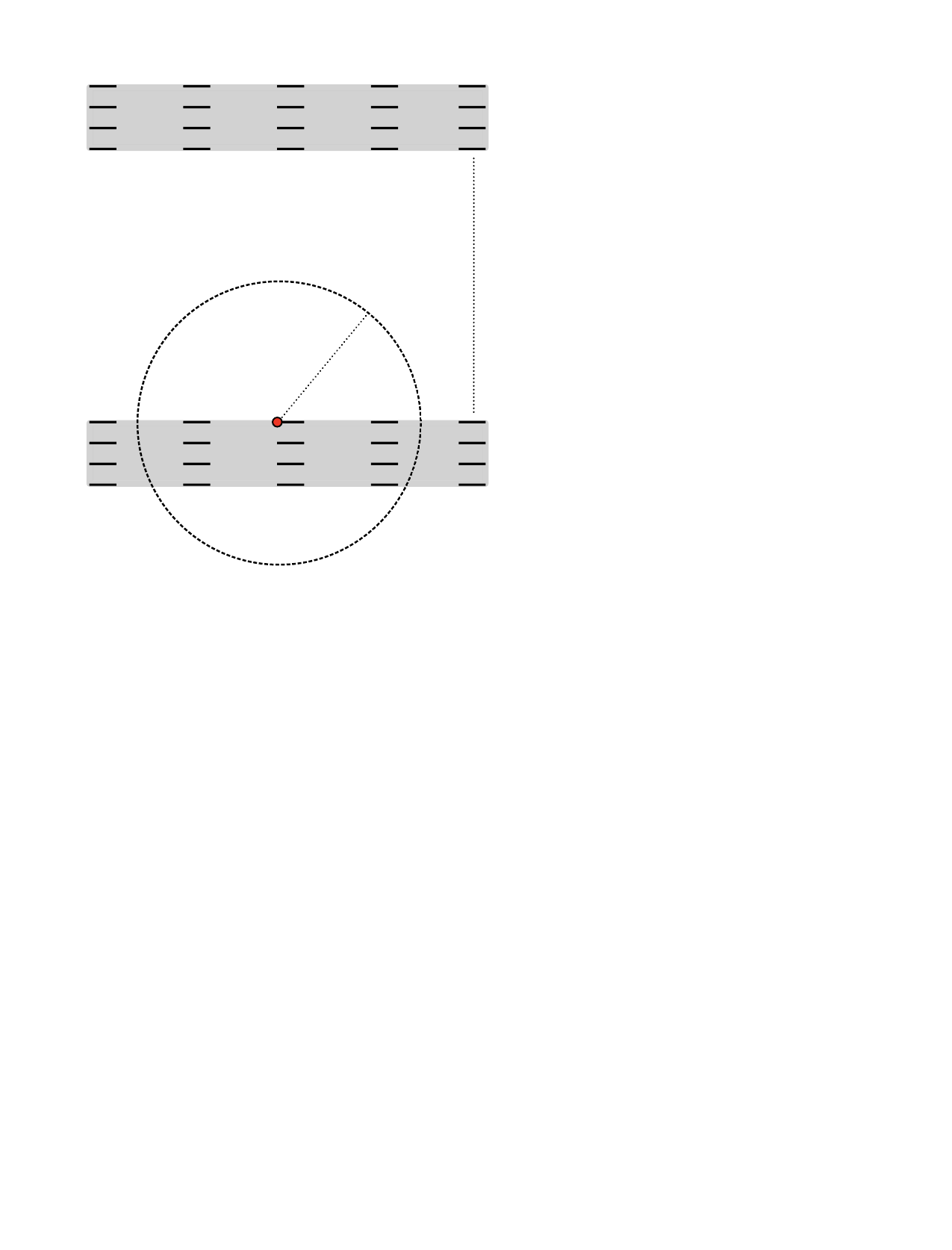}
\put(97,50){$\rho^{j - 1}$}
\put(45,19){\small{$x$}}
\put(50,26){$r$}
\put(15,7){$\mathbf{R}$}
\put(15,90){$\mathbf{R}'$}
\put(48,76){$\rho^{j - 1}$}
\end{overpic}
\caption{Case $\rho^{j} \cdot \rho^{(1 + \tau)/2} \leq r \leq 10\rho^{j - 1}$.}\label{fig2}
\end{center}
\end{figure}
\subsubsection*{Case where $10\rho^{j - 1} \cdot \rho^{(1 + \tau)/2} \leq r \leq 10\rho^{j - 1}$} In this range the disc $B(x,r)$ will intersect multiple $j$-stacks $\mathcal{S}_{1},\ldots,\mathcal{S}_{n} \subset \mathcal{R}_{j}$: 
\begin{equation}\label{form12} n \sim \frac{r}{\rho^{j - 1} \cdot \rho^{(1 + \tau)/2}}. \end{equation} 
The key observation this time is that the vertical separation of (vertically) neighbouring rectangles $\mathbf{R},\mathbf{R}' \in \mathcal{R}_{j - 1}$ is $\rho^{j - 1}$, see Figure \ref{fig2}. Since $r \leq 10\rho^{j - 1}$, the disc $B(x,r)$ may only intersect those $\mathcal{R}_{j}$-rectangles in a fixed stack $\mathcal{S}_{i}$ which are additionally contained in $\mathbf{R} \ni x$, or at most $30$ neighbours of $\mathbf{R}$ in $\mathcal{R}_{j - 1}$. Let 
\begin{displaymath} S_{i}(\mathbf{R}') := \cup \{R \in \mathcal{R}_{j} : R \in \mathcal{S}_{i} \text{ and } R \subset \mathbf{R}'\}, \qquad \mathbf{R}' \in \mathcal{R}_{j - 1}. \end{displaymath}
Then $\mu(S_{i}(\mathbf{R}')) = \rho^{j - 1} \cdot \rho^{(1 + \tau)/2}$. This gives $\mu(B(x,r)) \lesssim r$ when combined with \eqref{form12}. The lower bound $\mu(B(x,r)) \gtrsim r$ follows by observing that $B(x,r)$ also contains $\sim r/(\rho^{j - 1} \cdot \rho^{(1 + \tau)/2})$ sets of the form $S_{i}(\mathbf{R})$. This proves the Ahlfors $(1,C)$-regularity of $\mu$.

To complete the proof of Theorem \ref{main2}, let $\nu_{0}$ be the (self-similar) $(\tau,C)$-Frostman measure from Proposition \ref{mainProp}, with $\Theta_{0} := \spt \nu_{0}$, and let $\bar{\nu}$ be the push-forward of $\nu_{0}$ under the map $\theta \mapsto \rho^{-(1 + \tau)/2}\theta$. Then $\bar{\nu}$ is a probability measure on $\R$, and 
\begin{equation}\label{form13} \bar{\nu}(B(x,r)) = \nu_{0}(B(\rho^{(1 + \tau)/2}x,\rho^{(1 + \tau)/2}r)) \lesssim \rho^{\tau(1 + \tau)/2}r^{\tau}, \qquad  x \in \R, \, r > 0. \end{equation}
The measure $\bar{\nu}$ is not quite admissible for Theorem \ref{main2}, because it is not a probability measure on $[0,1]$. To fix this, we define $\nu := \bar{\nu}([0,1])^{-1}\bar{\nu}|_{[0,1]}$. Note that
\begin{displaymath} \bar{\nu}([0,1]) = \nu_{0}([0,\rho^{(1 + \tau)/2}]) \geq \nu_{0}([0,\rho]) = \rho^{\tau}, \end{displaymath}
using the self-similar definition of $\nu_{0}$ (see \eqref{form14}). Combining this lower bound with \eqref{form13} leads to $\nu(B(x,r)) \lesssim \rho^{\tau(\tau - 1)/2}r^{\tau} \sim (\log 1/\delta)^{3\tau(1-\tau)/(t - \tau)}$ for all $x \in \R$ and $r > 0$.

Finally, writing $\spt \nu =: \Theta \subset \rho^{-(1 + \tau)/2}\Theta_{0}$, we have $A_{0} + \Theta B \subset A_{0} + \Theta_{0}B_{0}$ (recalling $B = \rho^{(1 + \tau)/2}B_{0}$), and therefore 
\begin{displaymath} N_\delta (\pi_{\theta}(K)) \leq N_\delta (A_{0} + \Theta B) \leq N_\delta (A_{0} + \Theta_{0}B_{0}) \leq \delta^{(1 - t)/2}, \qquad \theta \in \Theta \end{displaymath}
by Proposition \ref{mainProp}. This completes the proof of Theorem \ref{main2}.

\bibliographystyle{plain}
\bibliography{references}

\end{document}